\numberwithin{equation}{section}
\theoremstyle{plain}
\newtheorem{theorem}{Theorem}[section]
\newtheorem{lemma}[theorem]{Lemma}
\theoremstyle{definition}
\newtheorem{remark}[theorem]{Remark}
\newtheorem{example}[theorem]{Example}
\begin{document}

\title[Invertibility of functions of operators]{Invertibility of functions of operators and existence of hyperinvariant subspaces}

\author{Maria F. Gamal'}
\address{
 St. Petersburg Branch\\ V. A. Steklov Institute 
of Mathematics\\
 Russian Academy of Sciences\\ Fontanka 27, St. Petersburg\\ 
191023, Russia  
}
\email{gamal@pdmi.ras.ru}

\subjclass[2010]{Primary 47A15; Secondary  47A60, 47A10}

\keywords{Hyperinvariant subspace,  polynomially bounded operator}

\begin{abstract} Let $T$ be an absolutely continuous   polynomially bounded operator, and let $\theta$ be a singular inner function. It is shown that if $\theta(T)$ is invertible and some additional conditions are fulfilled, then $T$ has nontrivial 
hyperinvariant subspaces. 
\end{abstract}

\maketitle

\section{Introduction}

Let $\mathcal H$ be a (complex, separable, infinite dimensional) Hilbert space, and let  $\mathcal L(\mathcal H)$ be the algebra of all (linear, bounded) operators acting on  $\mathcal H$. A (closed) subspace  $\mathcal M$ of  $\mathcal H$ is called \emph{invariant} 
for an operator  $T\in\mathcal L(\mathcal H)$, if $T\mathcal M\subset\mathcal M$, and $\mathcal M$ is called \emph{hyperinvariant} 
for $T$ if $A\mathcal M\subset\mathcal M$ for all $A\in\mathcal L(\mathcal H)$ such that $AT=TA$.  The complete lattice of all invariant (resp.,  hyperinvariant) subspaces of $T$ is denoted by  $\operatorname{Lat}T$ (resp., by 
$\operatorname{Hlat}T$). 
 The hyperinvariant subspace problem is the question whether $\operatorname{Hlat}T$ is nontrivial 
(i.e., $\operatorname{Hlat}T\neq\{\{0\},\mathcal H\}$) whenever $T$ is not a scalar multiple of the identity operator $I$.

 The operator $T\in\mathcal L(\mathcal H)$ is called  \emph{polynomially bounded}, if there exists a constant $M$ such that 
$$\|p(T)\|\leq M \sup\{|p(z)|:  |z|\leq 1\} $$ for every (analytic) polynomial $p$. For every polynomially bounded operator $T\in\mathcal L(\mathcal H)$ 
there exist $\mathcal H_a$, $\mathcal H_s\in\operatorname{Hlat}T$ such that $ \mathcal H=\mathcal H_a\dotplus\mathcal H_s$, $T|_{\mathcal H_s}$ is similar to a 
singular unitary operator, and $T|_{\mathcal H_a}$ has an $H^\infty$-functional calculus, that is, $T|_{\mathcal H_a}$ is an 
 \emph{absolutely continuous (a.c.)} polynomially bounded operator  \cite{mlak}, \cite{ker16}. 
Thus, if $\mathcal H_s\neq\{0\}$ and $T\neq cI$,  then $T$ has nontrivial  hyperinvariant subspaces. For the existence of invariant subspaces of polynomially 
bounded operators see \cite{rej}.

The operator $T\in\mathcal L(\mathcal H)$  is called 
a  \emph{contraction}, if $\|T\|\leq 1$. 
Every contraction is polynomially bounded with  constant 
$1$ by the von Neumann inequality (see, for example, {\cite[Proposition I.8.3]{sznagy}}). 
(It is well known that the converse is not true,  
see \cite{pis} for  the first example of a polynomially bounded operator which is not similar to a contraction.)

It is well known that if the spectrum $\sigma(T)$  of an operator $T$ is not connected, 
then nontrivial  hyperinvariant subspaces of  $T$ can be found by using 
the Riesz--Dunford functional calculus, see, for example {\cite[Theorem 2.10]{rara}}. 
A similar method can be applied, if an operator $T$ has sufficiently rich spectrum and 
appropriate estimate for the norm of the resolvent \cite{apostol}, {\cite[Sec. 4.1]{chalpart}}. 
Using such a method, we show that an a.c. polynomially bounded operator $T$  has 
 nontrivial  hyperinvariant subspaces, if $\theta(T)$ is invertible for a singular inner function $\theta$ and  some
 additional conditions are fulfilled (Sec. 2). Moreover, in this case $T$ cannot be  quasianalytic. 
Examples of quasianalytic operators $T$  for which $\theta(T)$ is invertible for some inner functions $\theta$ are given 
in Sec. 3. (See the beginning of Sec. 3 for references about quasianalyticity.)
In Sec. 4 it  is shown that  if $\varphi(T)$ is invertible for some weighted  shift $T$ and function $\varphi$, 
then $T$ is similar to the simple bilateral shift.

Symbols $\mathbb D$, $\operatorname{clos}\mathbb D$, and $\mathbb T$ denote the open unit disc, the closed unit disc, 
and the unit circle, respectively. The normalized Lebesgue measure on $\mathbb T$ is denoted by $m$.

For any finite, positive, singular (with respect to $m$) Borel measure $\mu$ on $\mathbb T$ define a function  $\theta_\mu$ by the formula 
\begin{equation}\label{thetamu}\theta_\mu(z)=\exp\int_{\mathbb T}\frac{z+\zeta}{z-\zeta}\text{d}\mu(\zeta)
 \ \ \ \ \   \ (z\in\mathbb D).\end{equation}
 The function $\theta_\mu$ is a singular inner function. Recall that $\theta_\mu$ has nontangential boundary values 
equal to zero a.e. with respect to $\mu$ (see, for example, {\cite[Theorem II.6.2]{garn}}).
 As usual, the Dirac measure at a point $\zeta$ is denoted by $\delta_\zeta$.

For $\lambda\in\mathbb D$  set 
\begin{equation}\label{drzeta} D(\lambda)=\{z\in\mathbb C\ :\ |z-\lambda|<1-|\lambda|\}.\end{equation}

The following simple lemmas are given for convenience of references. 

\begin{lemma}\label{adelta} Let $0<r<1$, and let $\zeta\in\mathbb T$. Then 
 \begin{align*} \frac{1-|z|^2}{|\zeta-z|^2}&\leq\frac{r}{1-r}, \ \ \  \text{ if }  \ z\in\mathbb D\setminus D(r\zeta), \\
\text{ and } \   \frac{1-|z|^2}{|\zeta-z|^2}&>\frac{r}{1-r}, \ \ \  \text{ if }  \ z\in D(r\zeta). \end{align*}\end{lemma}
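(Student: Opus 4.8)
The plan is to reduce both inequalities to a single algebraic condition describing membership in the disc $D(r\zeta)$. The quantity $\frac{1-|z|^2}{|\zeta-z|^2}$ is, up to the point evaluation, the Poisson kernel, whose level curves are the circles internally tangent to $\mathbb T$ at $\zeta$; I expect the level set $\{z:\frac{1-|z|^2}{|\zeta-z|^2}=\frac{r}{1-r}\}$ to be exactly the boundary circle $\partial D(r\zeta)$, with the interior of $D(r\zeta)$ giving the strict inequality. Rather than invoke any general theory, I would verify this by a direct computation, every step of which is an equivalence.

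First I would rewrite the inequality $\frac{1-|z|^2}{|\zeta-z|^2}\le\frac{r}{1-r}$. Since $0<r<1$ and $|\zeta-z|^2>0$ for $z\in\mathbb D$, multiplying across preserves the direction of the inequality and turns it into $(1-r)(1-|z|^2)\le r|\zeta-z|^2$. Expanding $|\zeta-z|^2=1-2\operatorname{Re}(\overline\zeta z)+|z|^2$ and collecting terms, the $|z|^2$-coefficients combine to $1$ and the constant terms to $1-2r$, so the inequality reduces to $1-2r\le|z|^2-2r\operatorname{Re}(\overline\zeta z)$.

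Next I would complete the square on the right: since $|z-r\zeta|^2=|z|^2-2r\operatorname{Re}(\overline\zeta z)+r^2$, the inequality is equivalent to $(1-r)^2\le|z-r\zeta|^2$, that is, $|z-r\zeta|\ge 1-r$. By the definition $(\ref{drzeta})$ of $D(r\zeta)$ this says precisely that $z\notin D(r\zeta)$, which is the first assertion. Running the identical chain of equivalences with the reverse strict inequality yields $|z-r\zeta|<1-r$, i.e. $z\in D(r\zeta)$, which is the second assertion.

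Since every step is an equivalence with the direction of the inequality preserved throughout, there is no genuine obstacle; the only point requiring a little care is to track the algebra so that the $|z|^2$ and constant terms combine correctly, and to observe that the particular value $r/(1-r)$ is exactly what makes the completed square produce centre $r\zeta$ and radius $1-r$, so that the resulting circle coincides with $\partial D(r\zeta)$. A preliminary rotation $z\mapsto\overline\zeta z$ could be used to normalize $\zeta=1$, but it is unnecessary, as the computation goes through verbatim for general $\zeta\in\mathbb T$.
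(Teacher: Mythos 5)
Your proof is correct: every step is a genuine equivalence, and the algebra checks out (the $|z|^2$-coefficients combine to $1$, the constants to $1-2r$, and completing the square yields $(1-r)^2\le|z-r\zeta|^2$, i.e.\ $z\notin D(r\zeta)$ by \eqref{drzeta}). The paper states Lemma~\ref{adelta} without proof, and your direct computation is exactly the standard argument one would supply.
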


\begin{lemma}\label{lemmaee} Suppose that $E\subset\mathbb T$ is a non-empty closed set, $m(E)=0$,
$\mathbb T\setminus E=\cup_{n\in\frak N}\mathcal J_{\zeta_{1n},\zeta_{2n}}$, where $\frak N\subset \mathbb N$, 
$\mathcal J_{\zeta_{1n},\zeta_{2n}}$ is the open subarc of $\mathbb T$ with endpoints $\zeta_{1n}$,$\zeta_{2n}$,  
$\mathcal J_{\zeta_{1n},\zeta_{2n}}\cap\mathcal  J_{\zeta_{1l},\zeta_{2l}}=\emptyset$ for $n\neq l$. 
Furthermore, let $0<r<1$, and let $\mu$ be a finite positive Borel measure on $E$. 
Then $$|\theta_\mu(z)|\geq\exp\frac{r\mu(E)}{r-1}\ \ \text{ for every } 
z\in\mathbb D\setminus \bigcup_{n\in\frak N,\  k=1,2}D(r\zeta_{kn}).$$
\end{lemma}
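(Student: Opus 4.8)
The plan is to reduce the inequality to a pointwise estimate and then invoke Lemma~\ref{adelta}; the whole difficulty lies in one planar-geometry observation about the discs $D(r\zeta)$. First I would exploit the explicit form of $\theta_\mu$. Since $\operatorname{Re}\frac{z+\zeta}{z-\zeta}=-\frac{1-|z|^2}{|z-\zeta|^2}$ for $z\in\mathbb D$, $\zeta\in\mathbb T$, and since $\mu$ is carried by $E$, one has
\[ |\theta_\mu(z)|=\exp\Bigl(-\int_E\frac{1-|z|^2}{|z-\zeta|^2}\,\mathrm d\mu(\zeta)\Bigr). \]
Consequently the asserted bound $|\theta_\mu(z)|\ge\exp\frac{r\mu(E)}{r-1}$ is equivalent to $\int_E\frac{1-|z|^2}{|z-\zeta|^2}\,\mathrm d\mu(\zeta)\le\frac{r}{1-r}\,\mu(E)$, which in turn follows at once if the integrand is $\le\frac{r}{1-r}$ for every $\zeta\in E$. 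By Lemma~\ref{adelta} this last statement is precisely the requirement that $z\notin D(r\zeta)$ for every $\zeta\in E$.

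Everything therefore reduces to the geometric claim: if $z\in\mathbb D$ lies outside $\bigcup_{n\in\mathfrak N,\,k=1,2}D(r\zeta_{kn})$, then $z\notin D(r\zeta)$ for all $\zeta\in E$. I would argue the contrapositive. The key observation is that, for fixed $z$, the set of centres that capture $z$, namely $A(z)=\{\zeta\in\mathbb T:\ z\in D(r\zeta)\}=\{\zeta\in\mathbb T:\ |z-r\zeta|<1-r\}$, is the intersection of $\mathbb T$ with the open disc $\{w\in\mathbb C:\ |w-z/r|<(1-r)/r\}$ and is therefore an (open) arc of $\mathbb T$.

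Now suppose $z\in D(r\zeta_0)$ for some $\zeta_0\in E$, so that $\zeta_0\in A(z)$ and $A(z)$ is a nonempty arc. Because $m(E)=0$, the set $E$ contains no arc, so $A(z)\not\subseteq E$; pick $w\in A(z)\setminus E$, which lies in one of the complementary arcs $\mathcal J_{\zeta_{1n},\zeta_{2n}}$. Travelling along the arc $A(z)$ from $w$ to $\zeta_0$ one starts inside the open arc $\mathcal J_{\zeta_{1n},\zeta_{2n}}$ and ends at $\zeta_0\in E$, which lies outside it; hence an endpoint $\zeta_{kn}$ of $\mathcal J_{\zeta_{1n},\zeta_{2n}}$ is met on the subarc joining $w$ to $\zeta_0$, and this subarc lies inside $A(z)$. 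Thus $\zeta_{kn}\in A(z)$, i.e.\ $z\in D(r\zeta_{kn})$, contradicting the hypothesis. Feeding this back into the first paragraph, the integrand is $\le\frac{r}{1-r}$ throughout $E$, and integrating against $\mu$ gives the lemma. The one genuinely nonroutine point is this geometric step: recognizing that $A(z)$ is a single arc (so that a complementary arc cannot be ``jumped over'') and that $m(E)=0$ forces $A(z)$ to meet $\mathbb T\setminus E$; the Poisson-kernel identity and the final integration are routine.
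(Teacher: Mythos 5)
The paper states this lemma without proof (it is listed among the ``simple lemmas given for convenience of references''), and your argument is correct and is clearly the intended one: the Poisson-kernel identity $|\theta_\mu(z)|=\exp\bigl(-\int_E\frac{1-|z|^2}{|z-\zeta|^2}\,\mathrm d\mu(\zeta)\bigr)$ reduces the bound to Lemma~\ref{adelta}. Your connectedness argument for the key geometric step --- that $A(z)=\{\zeta\in\mathbb T:\ z\in D(r\zeta)\}$ is an arc, so if it met $E$ it would have to contain an endpoint $\zeta_{kn}$ of some complementary arc --- is sound (including the degenerate case $A(z)=\mathbb T$, where the contradiction is immediate).
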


\begin{lemma}\label{lemmabb}
Let $\lambda\in\mathbb D\setminus\{0\}$. Set $b_\lambda(z)=\frac{|\lambda|}{\lambda}\frac{\lambda-z}{1-\overline\lambda z}\ \  (z\in\mathbb D).$ Then 
\begin{equation*}|b_\lambda(z)|\geq\frac{1}{3}\ \ \text{ for every } z\in\mathbb D\setminus D(\lambda).
\end{equation*}\end{lemma}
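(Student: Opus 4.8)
The plan is to reduce the claim to a lower estimate of the quotient of moduli. Since the prefactor $|\lambda|/\lambda$ is unimodular and does not affect the absolute value, we have $|b_\lambda(z)|=\frac{|\lambda-z|}{|1-\overline\lambda z|}$, so it suffices to prove $3|\lambda-z|\geq|1-\overline\lambda z|$ for every $z\in\mathbb D\setminus D(\lambda)$. The only information the hypothesis gives us is, by \eqref{drzeta}, the single inequality $|z-\lambda|\geq 1-|\lambda|$; accordingly, the whole argument should be arranged so that every quantity is controlled through $|z-\lambda|$ and $|\lambda|$ alone.

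The key algebraic step is to rewrite the denominator so that the factor $z-\lambda$ appears explicitly:
\[
1-\overline\lambda z=(1-|\lambda|^2)-\overline\lambda(z-\lambda).
\]
The triangle inequality then yields $|1-\overline\lambda z|\leq(1-|\lambda|^2)+|\lambda|\,|z-\lambda|$. Factoring $1-|\lambda|^2=(1-|\lambda|)(1+|\lambda|)$ and inserting the hypothesis $1-|\lambda|\leq|z-\lambda|$ replaces the first summand by $(1+|\lambda|)|z-\lambda|$, so that
\[
|1-\overline\lambda z|\leq(1+|\lambda|)|z-\lambda|+|\lambda|\,|z-\lambda|=(1+2|\lambda|)|z-\lambda|<3|z-\lambda|,
\]
the last step using $|\lambda|<1$. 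Dividing through gives $|b_\lambda(z)|>\tfrac13$, which even slightly improves the stated bound.

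I expect the sole obstacle to be the choice of estimate for the denominator. The crude bound $|1-\overline\lambda z|\leq 1+|\lambda|\,|z|<1+|\lambda|<2$, combined with $|\lambda-z|\geq 1-|\lambda|$, only produces $|b_\lambda(z)|\geq\frac{1-|\lambda|}{1+|\lambda|}$, which tends to $0$ as $|\lambda|\to 1$ and therefore cannot yield a constant independent of $\lambda$. Exposing the factor $z-\lambda$ inside $1-\overline\lambda z$ is precisely what converts the geometric hypothesis $z\notin D(\lambda)$ into the cancellation that kills the troublesome factor $1-|\lambda|$ and delivers the uniform constant $1/3$.
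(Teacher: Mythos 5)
Your proof is correct: the identity $1-\overline\lambda z=(1-|\lambda|^2)-\overline\lambda(z-\lambda)$ together with $|z-\lambda|\geq 1-|\lambda|$ gives $|1-\overline\lambda z|\leq(1+2|\lambda|)|z-\lambda|<3|z-\lambda|$, which is exactly the natural argument for this bound (the paper states the lemma without proof, listing it among the ``simple lemmas given for convenience of references''). Your computation also shows the constant $1/3$ is sharp, since $|b_\lambda(\lambda-(1-|\lambda|))|=1/(1+2|\lambda|)\to 1/3$ as $|\lambda|\to 1$.
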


\section{Estimates of the resolvent}

The main result of this section (Theorem~\ref{T:4.8}) is the following. Let $T$ be an a.c. polynomially bounded operator, and  let $\theta$ be a singular inner function. If $\theta(T)$ is invertible, then, under some additional conditions,  $T$ has nontrivial hyperinvariant subspaces.

 Suppose that $\Gamma$ is a simple closed curve, 
 \begin{equation}\label{E:4.1}\begin{gathered}\Gamma\cap\mathbb T=\{\zeta_{\Gamma 1},\zeta_{\Gamma 2}\}, \ \ \zeta_{\Gamma 1}\neq\zeta_{\Gamma 2}, \ \text{ and there exists } \varepsilon>0\ \text{ such that  } \\ 
\Gamma\cap\{z\in\mathbb C: |z-\zeta_{\Gamma l}|\leq \varepsilon, \ |z|\leq 1\} \  \text{ and } \ 
\Gamma\cap\{z\in\mathbb C: |z-\zeta_{\Gamma l}|\leq \varepsilon, \ |z|\geq 1\} \\ 
\text{ are segments nontangential to } \mathbb T\ \text{ at } \zeta_{\Gamma l} \ \ (l=1,2). \end{gathered} \end{equation}
Let $\Omega_\Gamma$ be the  bounded component of $\mathbb C\setminus\Gamma$. 

The following lemmas are well known, see, for example, \cite{apostol}, {\cite[Lemma 3.1]{berc}}, {\cite[Lemma 6]{tak}},
 {\cite[Sec. 4.1]{chalpart}}. We give proofs to emphasize some details.

\begin{lemma}\label{L:4.1} Suppose that $\Gamma$ and $\Gamma '$ are two simple closed rectifiable curves, 
$\Gamma$ and $\Gamma '$ satisfy \eqref{E:4.1},  and 
 \begin{align}\label{E:4.2}\Gamma\setminus\{\zeta_{\Gamma 1},\zeta_{\Gamma 2}\}\subset\mathbb C\setminus(\Gamma '\cup \Omega_{\Gamma '}), \ \  \Gamma '\setminus\{\zeta_{\Gamma '1},\zeta_{\Gamma '2}\}\subset\mathbb C\setminus(\Gamma\cup \Omega_{\Gamma}). \end{align} 
Suppose that $T\in\mathcal L(\mathcal H)$ has the following properties:
\begin{enumerate}[\upshape (i)]
\item there exist $C>0$ and $k\in\mathbb N$ such that $\|(T-\lambda I)^{-1}\|\leq C/\bigl|1-|\lambda|\bigr|^k$ for 
all $\lambda\in\Gamma\setminus\{\zeta_{\Gamma 1},\zeta_{\Gamma 2}\}$ and all 
$\lambda\in\Gamma '\setminus\{\zeta_{\Gamma '1},\zeta_{\Gamma '2}\}$;
\item $\sigma(T)\cap\Omega_\Gamma\neq\emptyset$ and $\sigma(T)\cap\Omega_{\Gamma '}\neq\emptyset$.
\end{enumerate}
Then there exist $\mathcal M$, $\mathcal M '\in\operatorname{Hlat}T$ such that $\mathcal M\neq\{0\}$, $\mathcal M '\neq\{0\}$, 
$\sigma(T|_{\mathcal M})\subset\mathbb C\setminus\Omega_\Gamma$,  and 
$\sigma(T|_{\mathcal M '})\subset\mathbb C\setminus\Omega_{\Gamma '}$.\end{lemma}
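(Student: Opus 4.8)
The plan is to realize the two subspaces as closures of ranges of weighted Riesz-type integrals, one attached to each curve. Put $\psi_\Gamma(\lambda)=(\lambda-\zeta_{\Gamma 1})(\lambda-\zeta_{\Gamma 2})$, fix an integer $N>k$, and set
\[ X_\Gamma=\frac{1}{2\pi i}\int_\Gamma\psi_\Gamma(\lambda)^N(T-\lambda I)^{-1}\,d\lambda, \]
defining $X_{\Gamma'}$ analogously. First I would check convergence. The only difficulty is near the two points $\zeta_{\Gamma l}$, where by \eqref{E:4.1} the curve meets $\mathbb T$ nontangentially, so $\bigl|1-|\lambda|\bigr|\geq c\,|\lambda-\zeta_{\Gamma l}|$ there; then (i) gives $\|(T-\lambda I)^{-1}\|\leq C'|\lambda-\zeta_{\Gamma l}|^{-k}$, while $|\psi_\Gamma(\lambda)^N|\leq C''|\lambda-\zeta_{\Gamma l}|^{N}$. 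Since $N>k$, the integrand is bounded (indeed tends to $0$) near $\zeta_{\Gamma l}$ and is continuous on the rest of the rectifiable curve, so $X_\Gamma$ exists as a Bochner integral. As $(T-\lambda I)^{-1}$ commutes with every $A\in\mathcal L(\mathcal H)$ with $AT=TA$, so does $X_\Gamma$; hence $\operatorname{clos}\operatorname{ran}X_\Gamma\in\operatorname{Hlat}T$, and likewise for $X_{\Gamma'}$.

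Next I would prove $\sigma(T|_{\operatorname{clos}\operatorname{ran}X_\Gamma})\subseteq\overline{\Omega_\Gamma}$. For $\mu\notin\overline{\Omega_\Gamma}$ set
\[ Y_\mu=\frac{1}{2\pi i}\int_\Gamma\frac{\psi_\Gamma(\lambda)^N}{\lambda-\mu}(T-\lambda I)^{-1}\,d\lambda, \]
which converges by the same estimate and is analytic in $\mu$ on the unbounded component of $\mathbb C\setminus\Gamma$. Writing $T-\mu=(T-\lambda)+(\lambda-\mu)$ under the integral and using $\frac{1}{2\pi i}\int_\Gamma\frac{\psi_\Gamma(\lambda)^N}{\lambda-\mu}\,d\lambda=0$ (Cauchy's theorem, the integrand being holomorphic inside $\Gamma$ since $\mu\notin\overline{\Omega_\Gamma}$) yields $(T-\mu)Y_\mu=Y_\mu(T-\mu)=X_\Gamma$. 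Expanding $\tfrac{1}{\lambda-\mu}$ for large $|\mu|$ and using $\frac{1}{2\pi i}\int_\Gamma\psi_\Gamma(\lambda)^N\lambda^{\,n}(T-\lambda I)^{-1}\,d\lambda=T^nX_\Gamma$ (by induction from $\lambda(T-\lambda)^{-1}=T(T-\lambda)^{-1}-I$ and $\int_\Gamma(\text{polynomial})\,d\lambda=0$), one finds $\operatorname{ran}Y_\mu\subseteq\operatorname{clos}\operatorname{ran}X_\Gamma$ for large $\mu$, hence, by analytic continuation, for all $\mu$ in the unbounded component. These relations let me define on $\operatorname{ran}X_\Gamma$ the map $X_\Gamma w\mapsto Y_\mu w$ (well defined because $X_\Gamma w=0$ forces $Y_\mu w=0$, first for $\mu\in\rho(T)$ via $Y_\mu=(T-\mu)^{-1}X_\Gamma$ and then everywhere by analyticity); it inverts $T-\mu$ on $\operatorname{ran}X_\Gamma$ and takes values in $\operatorname{clos}\operatorname{ran}X_\Gamma$, so after the continuity upgrade it furnishes $(T|_{\operatorname{clos}\operatorname{ran}X_\Gamma}-\mu)^{-1}$.

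To see $\operatorname{clos}\operatorname{ran}X_\Gamma\neq\{0\}$ I would argue by contradiction: if $X_\Gamma=0$, then for $\mu\in\Omega_\Gamma$ the same computation, now with $\frac{1}{2\pi i}\int_\Gamma\frac{\psi_\Gamma(\lambda)^N}{\lambda-\mu}\,d\lambda=\psi_\Gamma(\mu)^N$ (residue at the interior point $\mu$), gives $(T-\mu)Y_\mu=Y_\mu(T-\mu)=\psi_\Gamma(\mu)^N I$. Since $\mu\in\Omega_\Gamma$ lies off $\mathbb T$ while the zeros $\zeta_{\Gamma l}$ of $\psi_\Gamma$ lie on $\mathbb T$, we have $\psi_\Gamma(\mu)\neq0$, whence $\Omega_\Gamma\subseteq\rho(T)$, contradicting (ii). Thus $X_\Gamma\neq0$, and likewise $X_{\Gamma'}\neq0$. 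I then set $\mathcal M=\operatorname{clos}\operatorname{ran}X_{\Gamma'}$ and $\mathcal M'=\operatorname{clos}\operatorname{ran}X_\Gamma$. Condition \eqref{E:4.2} forces $\Omega_\Gamma\cap\Omega_{\Gamma'}=\emptyset$ and, since an endpoint of one curve lying in the open interior of the other would put nearby arc points into that interior (violating \eqref{E:4.2}), even $\overline{\Omega_{\Gamma'}}\subseteq\mathbb C\setminus\Omega_\Gamma$ and $\overline{\Omega_\Gamma}\subseteq\mathbb C\setminus\Omega_{\Gamma'}$. Together with the spectral inclusions this gives $\sigma(T|_{\mathcal M})\subseteq\overline{\Omega_{\Gamma'}}\subseteq\mathbb C\setminus\Omega_\Gamma$ and $\sigma(T|_{\mathcal M'})\subseteq\overline{\Omega_\Gamma}\subseteq\mathbb C\setminus\Omega_{\Gamma'}$, as required.

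The convergence estimates and the Cauchy/residue bookkeeping are routine. The delicate point I expect to be the main obstacle is the passage from the formal inverse $X_\Gamma w\mapsto Y_\mu w$, defined on the possibly non-closed subspace $\operatorname{ran}X_\Gamma$, to a genuine bounded resolvent of $T|_{\operatorname{clos}\operatorname{ran}X_\Gamma}$: one must bound $\|Y_\mu w\|$ by $\|X_\Gamma w\|$, uniformly on compact subsets of the unbounded component, and this is exactly where the analyticity of $\mu\mapsto Y_\mu$ and its known form $Y_\mu=(T-\mu)^{-1}X_\Gamma$ on $\rho(T)$ must be exploited. Verifying this upgrade — equivalently, that $\operatorname{clos}\operatorname{ran}X_\Gamma$ is genuinely the spectral subspace of $T$ attached to $\overline{\Omega_\Gamma}$ — is the heart of the argument.
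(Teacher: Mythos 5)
Your construction of $X_\Gamma$, the convergence estimate near $\zeta_{\Gamma l}$ coming from the nontangentiality in \eqref{E:4.1}, and the nontriviality argument (if $X_\Gamma=\mathbb O$, then $(T-\mu I)Y_\mu=Y_\mu(T-\mu I)=\psi_\Gamma(\mu)^N I$ for $\mu\in\Omega_\Gamma$, so $\Omega_\Gamma\subset\rho(T)$, contradicting (ii)) are all correct; the last of these is in fact an elementary substitute for the Gelfand-character argument the paper uses to show $A\neq\mathbb O$. The gap is exactly where you flag it, and it is not merely delicate but unclosed: choosing $\mathcal M'=\operatorname{clos}\operatorname{ran}X_\Gamma$ forces you to prove $\sigma(T|_{\operatorname{clos}\operatorname{ran}X_\Gamma})\subset\operatorname{clos}\Omega_\Gamma$, i.e.\ that $X_\Gamma w\mapsto Y_\mu w$ is \emph{bounded} on $\operatorname{ran}X_\Gamma$ for every $\mu$ in the unbounded component of $\mathbb C\setminus\Gamma$, including points $\mu\in\sigma(T)$. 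The identity $Y_\mu=(T-\mu I)^{-1}X_\Gamma$ is available only on $\rho(T)$, and analytic continuation of the vector-valued function $\mu\mapsto Y_\mu w$ propagates the \emph{vanishing} $Y_\mu w=0$ for $w\in\ker X_\Gamma$ (so $Z_\mu$ is well defined) but does not propagate a norm estimate $\|Y_\mu w\|\leq C_\mu\|X_\Gamma w\|$ past $\sigma(T)$. What your relations actually give is an algebraic inverse $Z_\mu$ of $T-\mu I$ defined on the dense, generally non-closed subspace $\operatorname{ran}X_\Gamma$ and taking values in its closure; that says nothing about invertibility of the restriction to the closure (compare a diagonal operator with eigenvalues $1/n$, which is an algebraic bijection of the finitely supported vectors but is not invertible). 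Equivalently, $(T|_{\mathcal N}-\mu)\,Y_\mu|_{\mathcal N}=X_\Gamma|_{\mathcal N}$ with $\mathcal N=\operatorname{clos}\operatorname{ran}X_\Gamma$ would yield $\mu\notin\sigma(T|_{\mathcal N})$ only if $X_\Gamma|_{\mathcal N}$ were bounded below, which is not known.

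The repair is to take kernels, as the paper does: put $\mathcal M=\ker X_\Gamma$ and $\mathcal M'=\ker X_{\Gamma'}$. Since $Y_\mu$ commutes with $X_\Gamma$, it leaves $\ker X_\Gamma$ invariant, and your own residue computation for $\mu\in\Omega_\Gamma$ shows that $\psi_\Gamma(\mu)^{-N}Y_\mu|_{\ker X_\Gamma}$ is a bounded two-sided inverse of $(T-\mu I)|_{\ker X_\Gamma}$; hence $\sigma(T|_{\ker X_\Gamma})\cap\Omega_\Gamma=\emptyset$ with no limiting procedure (the paper gets the same exclusion via characters of a maximal commutative Banach algebra). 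Nontriviality of the kernels then requires the one ingredient your route let you skip: $X_\Gamma X_{\Gamma'}=X_{\Gamma'}X_\Gamma=\mathbb O$, proved by the resolvent identity, Fubini, and Cauchy's theorem, using \eqref{E:4.2} to see that each curve (minus its endpoints) lies outside the closure of the other's interior; this gives $\{0\}\neq\operatorname{clos}\operatorname{ran}X_{\Gamma'}\subset\ker X_\Gamma$, completing the proof along the paper's lines.
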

\begin{proof} Set $p(\lambda)=(\lambda-\zeta_{\Gamma 1})^k(\lambda-\zeta_{\Gamma 2})^k$, 
$q(\lambda)=(\lambda-\zeta_{\Gamma ' 1})^k(\lambda-\zeta_{\Gamma ' 2})^k$, $\lambda\in\mathbb C$. 
Since the segments defined in \eqref{E:4.1}  are  nontangential to $ \mathbb T$ at $ \zeta_{\Gamma l}$, 
$\zeta_{\Gamma' l}$ $\  (l=1,2)$, we have that 
  \begin{align}\label{E:4.3}\sup_{\lambda\in\Gamma\setminus\{\zeta_{\Gamma 1},\zeta_{\Gamma 2}\}}\!|p(\lambda)|\|(T-\lambda I)^{-1}\|<\infty,   
\sup_{\lambda\in\Gamma ' \setminus\{\zeta_{\Gamma ' 1},\zeta_{\Gamma ' 2}\}}\!|q(\lambda)|\|(T-\lambda I)^{-1}\|<\infty. 
\end{align}
Put
$$ A=\frac{1}{2\pi i}\int_{\Gamma}p(\lambda)(T-\lambda I)^{-1}\text{\rm d}\lambda, \ \ \ 
A'=\frac{1}{2\pi i}\int_{\Gamma '}q(\lambda)(T-\lambda I)^{-1}\text{\rm d}\lambda.$$
  Let $\{0\}\neq\mathcal E\in\operatorname{Hlat}T$.
Let ${\mathcal A}_{\mathcal E}\subset\mathcal L(\mathcal E)$ be a maximal commutative Banach algebra  such that $T|_{\mathcal E}$, 
$(T|_{\mathcal E}-\lambda I_{\mathcal E})^{-1}\in{\mathcal A}_{\mathcal E}$ for all $\lambda\in\mathbb C\setminus\sigma(T|_{\mathcal E})$.
We infer from \eqref{E:4.3} that $A|_{\mathcal E}$, $A'|_{\mathcal E}\in{\mathcal A}_{\mathcal E}$.
Let $\xi\in\Omega_\Gamma$. If $\xi\in\sigma(T|_{\mathcal E})$, then there exists a continuous algebra homomorphism 
$\phi\colon{\mathcal A}_{\mathcal E}\to\mathbb C$ such that $\phi(T|_{\mathcal E})=\xi$. Then 
 \begin{align}\label{E:4.4}\phi(A|_{\mathcal E})=\frac{1}{2\pi i}\int_{\Gamma}p(\lambda)(\xi-\lambda)^{-1}\text{\rm d}\lambda\neq 0. \end{align}
Applying \eqref{E:4.4} with $\mathcal E=\mathcal H$, we conclude that $A\neq\mathbb O$. Similarly, $A'\neq\mathbb O$. 

Put $\mathcal M =\ker A$ and  $\mathcal M '=\ker A'$. Since $A$ and $A'$ commute with all $B\in\mathcal L(\mathcal H)$ such that $BT=TB$, we have $\mathcal M$, $\mathcal M '\in\operatorname{Hlat}T$. 
We prove that $AA'=A'A=\mathbb O$ exactly as in {\cite[Lemma 3.1]{berc}} or {\cite[Sec. 4.1]{chalpart}}, and we obtain from the latter  equality that 
$\mathcal M\neq\{0\}$ and $\mathcal M '\neq\{0\}$. 

Let $\xi\in\Omega_\Gamma$. If $\xi\in\sigma(T|_{\mathcal M})$, then, by \eqref{E:4.4} applied with $\mathcal E=\mathcal M$, 
there exists an algebra homomorphism 
$\phi\colon{\mathcal A}_{\mathcal M}\to\mathbb C$ such that $\phi(A|_{\mathcal M})\neq 0$,
a contradiction with the definition of $\mathcal M$. Therefore, $\sigma(T|_{\mathcal M})\cap\Omega_\Gamma=\emptyset.$
Similarly, $\sigma(T|_{\mathcal M '})\cap\Omega_{\Gamma '}=\emptyset.$ \end{proof}

\begin{lemma} \label{L:4.2} Suppose that  $C>0$, $k\in\mathbb N$,  and an operator $T$ 
is given such that $\sigma(T)\subset\operatorname{clos}\mathbb D$ and $\|(T-\lambda I)^{-1}\|\leq C/(|\lambda|-1)^k$ for all 
$\lambda\notin\operatorname{clos}\mathbb D$. 
Put $$\Lambda =\sigma(T)\cup\bigl\{\lambda\in\mathbb D: \ \ \|(T-\lambda I)^{-1}\|\geq C/(1-|\lambda|)^k\bigr\}$$
and $$\mathcal Z=\bigl\{\zeta\in\sigma(T)\cap\mathbb T: \ \ \sup\{r\in [0,1): r\zeta\in\Lambda\}<1\bigr\}.$$ 
If $\mathcal Z$ is uncountable, then there exist $\mathcal M_1$, $\mathcal M_2\in\operatorname{Hlat}T$ such that $\mathcal M_1\neq\{0\}$, 
$\mathcal M_2\neq\{0\}$, and $\sigma(T|_{\mathcal M_1})\cap\sigma(T|_{\mathcal M_2})=\emptyset$.\end{lemma}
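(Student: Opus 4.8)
The plan is to reduce the statement to a single application of Lemma~\ref{L:4.1}. I will produce two simple closed curves $\Gamma,\Gamma'$ satisfying \eqref{E:4.1} and \eqref{E:4.2}, with disjoint bounded components that together cover $\sigma(T)$ up to finitely many boundary points, and on which the resolvent obeys the bound in Lemma~\ref{L:4.1}\,(i). Granting this, Lemma~\ref{L:4.1} yields nonzero $\mathcal M_1,\mathcal M_2\in\operatorname{Hlat}T$ with $\sigma(T|_{\mathcal M_1})\subset\mathbb C\setminus\Omega_\Gamma$ and $\sigma(T|_{\mathcal M_2})\subset\mathbb C\setminus\Omega_{\Gamma'}$. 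Since each $\mathcal M_j$ is hyperinvariant it is invariant for every resolvent $(T-\lambda I)^{-1}$, whence $\sigma(T|_{\mathcal M_j})\subset\sigma(T)$; together with a covering $\sigma(T)\subset\Omega_\Gamma\cup\Omega_{\Gamma'}\cup\{\text{crossing points}\}$ and $\operatorname{clos}\Omega_\Gamma\cap\operatorname{clos}\Omega_{\Gamma'}=\emptyset$, this forces $\sigma(T|_{\mathcal M_1})\cap\sigma(T|_{\mathcal M_2})=\emptyset$.

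First I would normalize the data. One checks that $\Lambda$ is closed and $\Lambda\subset\operatorname{clos}\mathbb D$, and that for $\zeta\in\mathcal Z$ the open radial segment $\{r\zeta:r\in(r_\zeta,1)\}$, with $r_\zeta=\sup\{r\in[0,1):r\zeta\in\Lambda\}<1$, lies in $\mathbb D\setminus\Lambda$; there $\|(T-r\zeta I)^{-1}\|<C/(1-r)^k$ and $r\zeta\notin\sigma(T)$. Writing $\mathcal Z=\bigcup_n\{\zeta:r_\zeta\le 1-1/n\}$ and using that $\mathcal Z$ is uncountable, a pigeonhole step gives an uncountable $\mathcal Z_0\subset\mathcal Z$ and $r_0<1$ with $r_\zeta\le r_0$ for all $\zeta\in\mathcal Z_0$. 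An uncountable subset of $\mathbb T$ has a nonempty perfect set of condensation points $P\subset\operatorname{clos}\mathcal Z_0\subset\sigma(T)\cap\mathbb T$; I would fix $a_+,a_-\in P$ so that each of the two open arcs of $\mathbb T\setminus\{a_+,a_-\}$ still contains points of $\mathcal Z_0$.

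The construction of the curves is the core. The outer part of each curve lies in $\{|z|>1\}$, where the bound $\|(T-\lambda I)^{-1}\|\le C/(|\lambda|-1)^k$ holds by hypothesis; the crossings of $\mathbb T$ are taken radial (hence nontangential, as \eqref{E:4.1} requires) at points of $\mathcal Z_0$, where the inner part can be pushed out to $\mathbb T$ along a gap segment in $\mathbb D\setminus\Lambda$. Thus everything reduces to the inner parts, namely a thin strip $S\subset\mathbb D\setminus\Lambda$ whose two boundary arcs emanate from crossing points in $\mathcal Z_0$ and which separates $\sigma(T)$, placing $a_+$ and $a_-$ on opposite sides. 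Because $S\subset\mathbb D\setminus\Lambda\subset\mathbb C\setminus\sigma(T)$, the strip carries no spectrum, so $\sigma(T)$ is split into the parts enclosed by $\Gamma$ and $\Gamma'$, which gives the covering used above; condition~(ii) of Lemma~\ref{L:4.1} holds since $a_\pm\in\sigma(T)$ lie in $\Omega_\Gamma$ and $\Omega_{\Gamma'}$.

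The hard part will be the existence of this separating strip inside $\mathbb D\setminus\Lambda$, and this is exactly where uncountability of $\mathcal Z$ must enter: the strip should thread up to $\mathbb T$ through the radial gaps at the (densely accumulating, since $P$ is perfect) points of $\mathcal Z_0$ near the separating locations and dip beneath $\Lambda$ between consecutive gaps, the short connecting bridges being supplied by the openness of $\mathbb C\setminus\Lambda$; a merely countable family of gaps would in general be too sparse to enclose the condensation points while staying off $\Lambda$. I expect the cleanest route is to realize each inner boundary arc as a polygonal path through radial gap segments joined by bridges lying in $\mathbb C\setminus\Lambda$, invoking compactness of the relevant arc of $\mathbb T$ together with the condensation property to keep the number of bridges finite, and then to verify that the resulting path separates $a_+$ from $a_-$. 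A secondary technical point is that the finitely many crossing points lie in $\sigma(T)\cap\mathbb T$; I would take the four crossing points of $\Gamma,\Gamma'$ distinct and arrange $\operatorname{clos}\Omega_\Gamma\cap\operatorname{clos}\Omega_{\Gamma'}=\emptyset$, so that no crossing point can belong to both $\sigma(T|_{\mathcal M_1})$ and $\sigma(T|_{\mathcal M_2})$, securing the stated disjointness.
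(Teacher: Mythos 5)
There is a genuine gap, and it is topological. Your plan rests on a single application of Lemma~\ref{L:4.1} to a pair $\Gamma,\Gamma'$ with $\operatorname{clos}\Omega_\Gamma\cap\operatorname{clos}\Omega_{\Gamma'}=\emptyset$ and $\sigma(T)\subset\Omega_\Gamma\cup\Omega_{\Gamma'}\cup\{\text{finitely many points}\}$. These two requirements are incompatible in general: $\operatorname{clos}\Omega_\Gamma\cap\mathbb T$ and $\operatorname{clos}\Omega_{\Gamma'}\cap\mathbb T$ are disjoint nonempty closed subsets of the connected set $\mathbb T$, so their union omits a nonempty \emph{open arc} of $\mathbb T$, not just finitely many points. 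Since nothing in the hypotheses prevents $\sigma(T)\cap\mathbb T=\mathbb T$ (this case occurs in the applications of the lemma), the covering fails on that arc, and the two upper bounds $\sigma(T|_{\mathcal M_1})\subset\sigma(T)\setminus\Omega_\Gamma$, $\sigma(T|_{\mathcal M_2})\subset\sigma(T)\setminus\Omega_{\Gamma'}$ no longer force disjointness. Concretely, your ``thin strip'' must meet $\mathbb T$ either in two nondegenerate arcs (whose spectrum is then controlled by neither curve) or in single points shared by $\Gamma$ and $\Gamma'$; the latter is exactly the configuration of Theorem~\ref{tcircle}, which for this reason only yields spectra contained in two closed arcs \emph{sharing their endpoints}, not disjoint spectra. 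Nor can you escape by making $\Omega_\Gamma$ and $\Omega_{\Gamma'}$ both large and overlapping: condition \eqref{E:4.2}, which is what gives $AA'=\mathbb O$ and hence the nontriviality of the kernels, forces the two bounded regions to be essentially exterior to one another.

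The paper resolves this by applying Lemma~\ref{L:4.1} \emph{twice}, with four curves threaded through radial gaps at ten points $\xi_1,\dots,\xi_{10}\in\mathcal Z$ clustered in a small arc (uncountability of $\mathcal Z$ is used only to produce ten such points with uniform parameters $a,b$ so that the sets $F(\xi_j,a,b)\subset\mathbb D\setminus\Lambda$ have a common point). From the pair $(\Gamma_1,\Gamma_1')$ one keeps only $\mathcal M_1$ with $\sigma(T|_{\mathcal M_1})\subset\mathbb C\setminus\Omega_{\Gamma_1}$, and from $(\Gamma_2,\Gamma_2')$ only $\mathcal M_2$ with $\sigma(T|_{\mathcal M_2})\subset\mathbb C\setminus\Omega_{\Gamma_2}$; here $\Omega_{\Gamma_1}$ and $\Omega_{\Gamma_2}$ are two \emph{large, heavily overlapping} regions with $\operatorname{clos}\mathbb D\subset\Omega_{\Gamma_1}\cup\Omega_{\Gamma_2}$, which is permissible because they never have to satisfy \eqref{E:4.2} with respect to each other, while the small auxiliary curves $\Gamma_1',\Gamma_2'$ serve only to certify $\mathcal M_l\neq\{0\}$. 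Combined with $\sigma(T|_{\mathcal M_l})\subset\operatorname{clos}\mathbb D$ (via the polynomially convex hull, or your resolvent argument), the spectra land in the two disjoint small sets $\operatorname{clos}\mathbb D\setminus\Omega_{\Gamma_l}$. If you want to repair your write-up, the essential change is to abandon the ``two disjoint regions covering $\sigma(T)$'' picture in favour of ``two overlapping regions covering $\operatorname{clos}\mathbb D$ with disjoint complements,'' which necessitates the second application of Lemma~\ref{L:4.1} and the extra crossing points.
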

\begin{proof} For $\zeta\in\mathbb T$ and $a$, $b\in (0,1)$ put $$F(\zeta,a,b)= \{r\zeta: a\leq r<1\}\cup\{z\in\mathbb D : \ |z-a\zeta|<b\}.$$
Since $\mathcal Z$ is uncountable, there exist $a$, $b\in (0,1)$ and $\xi_j\in\mathcal Z$, $j=1,\ldots,10$, 
such that $\xi_j$ are arranged on $\mathbb T$  in counter clockwise order, 
$$\cup_{j=1}^{10} F(\xi_j,a,b)\subset\mathbb D\setminus\Lambda \ \ \text{ and } \cap_{j=1}^{10}F(\xi_j,a,b)\neq\emptyset.$$
We construct four simple closed rectifiable curves $\Gamma_1$, $\Gamma_1 '$, $\Gamma_2$, $\Gamma_2 '$ 
such that they satisfy \eqref{E:4.1} (they cross $\mathbb T$ along radial segments),   
$$(\Gamma_1\cup\Gamma_1 '\cup\Gamma_2\cup\Gamma_2 ')\cap\mathbb D\subset\cup_{j=1}^{10} F(\xi_j,a,b),$$
\begin{equation*}\begin{aligned}&\zeta_{\Gamma_1 1}=\xi_4, \ \ \ \zeta_{\Gamma_1 2}=\xi_{10}, \ \ \ \ \ \ & \zeta_{\Gamma_1 '1}=\xi_1, \ \ \ \zeta_{\Gamma_1 '2}=\xi_3, \\ &
\zeta_{\Gamma_2 1}=\xi_5,  \ \ \ \zeta_{\Gamma_2 2 }=\xi_9, \ \ \ \ \ \  & \zeta_{\Gamma_2 '1}=\xi_6, \ \ \ \zeta_{\Gamma_2 '2}=\xi_8,\end{aligned}\end{equation*}
$$\xi_5,\xi_6,\xi_7,\xi_8,\xi_9\in\Omega_{\Gamma_1}, \ \xi_2\in\Omega_{\Gamma_1 '}, \ \ \  \xi_1,\xi_2,\xi_3,\xi_4,\xi_{10}\in\Omega_{\Gamma_2},  \  \xi_7\in\Omega_{\Gamma_2 '}, $$
$$(\operatorname{clos}\mathbb D\setminus\Omega_{\Gamma_1})\cap(\operatorname{clos}\mathbb D\setminus\Omega_{\Gamma_2})=\emptyset,$$
and each of the pairs $\Gamma_1$, $\Gamma_1 '$ and $\Gamma_2$, $\Gamma_2 '$ satisfies \eqref{E:4.2}. 

Applying  Lemma~\ref{L:4.1} to $\Gamma_1$, $\Gamma_1 '$, we obtain 
$\mathcal M_1\in\operatorname{Hlat}T$ such that $\mathcal M_1\neq\{0\}$, and $\sigma(T|_{\mathcal M_1})\subset\mathbb C\setminus\Omega_{\Gamma_1}$. 
Applying  Lemma~\ref{L:4.1} to $\Gamma_2$, $\Gamma_2 '$, we obtain 
$\mathcal M_2\in\operatorname{Hlat}T$ such that $\mathcal M_2\neq\{0\}$, and $\sigma(T|_{\mathcal M_2})\subset\mathbb C\setminus\Omega_{\Gamma_2}$. 
By {\cite[Theorem 0.8]{rara}}, the spectrum of the restriction of an operator on its invariant subspace is contained in the polynomially 
convex hull of  its spectrum. Therefore, $\sigma(T|_{\mathcal M_l})\subset\operatorname{clos}\mathbb D$ $\  (l=1,2)$.  Consequently, 
$\sigma(T|_{\mathcal M_l})\subset\operatorname{clos}\mathbb D\setminus\Omega_{\Gamma_l}$ $\  (l=1,2)$. \end{proof}

The investigation of a.c. polynomially bounded operators $T$ such that $\theta(T)$ is invertible 
for some inner function $\theta\in H^\infty$ is started in {\cite[Proposition 4.4 and Theorem 5.4]{ker11}}. In particular, it is proved in {\cite[Proposition 4.4]{ker11}}  that $\mathbb D\not\subset\sigma(T)$ for such $T$. In {\cite[Theorem 5.4]{ker11}}
 it is shown that if $\theta(T)$ is invertible for an inner function $\theta$, then $\theta(T)^{-1}$ belongs to the closure in the weak operator topology of the algebra of rational functions of $T$. 
 Note that the assumption of  quasianalyticity of $T$ from {\cite[Proposition 4.4 and Theorem 5.4]{ker11}} actually is not used in their proofs, and the proofs work for  a.c. polynomially bounded operators instead of a.c. contractions with  minimal changes.

\begin{lemma}\label{L:4.7} Suppose that $T\in\mathcal L(\mathcal H)$ is an a.c. polynomially bounded operator, $\varphi\in H^\infty$,
 and  $\varphi(T)$ is invertible. 
Then for every $0<c<1/\|\varphi(T)^{-1}\|$ there exists  $C>0$ {\rm (}which also depends on $\varphi$ and $T${\rm)}  such that 
$$\|(T-\lambda)^{-1}(I-\overline\lambda T)\| <  C\ \ \ \text{ for  all }   \lambda\in\mathbb D  \text{ such that }
\ |\varphi(\lambda)|\leq c.$$
Consequently, $$\|(T-\lambda I) ^{-1}x\|\leq C\|(I-\overline\lambda T)^{-1}x\| \ \ \text{ and }\ \  \|(T-\lambda I) ^{-1}\|  < CM/(1-|\lambda|) $$ for all $ x\in\mathcal H$  and all $ \lambda\in\mathbb D $  such that $|\varphi(\lambda)|\leq c$
{\rm (}where $M$ is the polynomial bound of $T${\rm )}.\end{lemma}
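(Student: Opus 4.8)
The plan is to reduce everything to a uniform bound on the inverse of the Blaschke factor $b_\lambda(T)$ of Lemma~\ref{lemmabb}. First I would record that, $T$ being polynomially bounded, taking $p(z)=z^n$ in the definition gives $\|T^n\|\le M$ for all $n\ge 1$, so $\sigma(T)\subset\operatorname{clos}\mathbb D$; hence for $\lambda\in\mathbb D$ the operator $I-\overline\lambda T$ is invertible and, by power boundedness, $\|(I-\overline\lambda T)^{-1}\|\le\sum_{n\ge 0}|\lambda|^n\|T^n\|\le M/(1-|\lambda|)$. Since $T$ is a.c. polynomially bounded it carries a bounded $H^\infty$-functional calculus with $\|f(T)\|\le M\|f\|_\infty$, so $b_\lambda(T)$ is defined, and once $\lambda\notin\sigma(T)$ a direct computation gives the identity $(T-\lambda I)^{-1}(I-\overline\lambda T)=-\tfrac{|\lambda|}{\lambda}\,b_\lambda(T)^{-1}$, whence $\|(T-\lambda I)^{-1}(I-\overline\lambda T)\|=\|b_\lambda(T)^{-1}\|$. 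Thus it suffices to prove that $b_\lambda(T)$ is invertible with $\|b_\lambda(T)^{-1}\|$ bounded independently of $\lambda$ on the set $\{|\varphi(\lambda)|\le c\}$.

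The key step is a factorization. For $\lambda\in\mathbb D\setminus\{0\}$ the function $g_\lambda=\varphi-\varphi(\lambda)$ lies in $H^\infty$ and vanishes at $\lambda$, so it is divisible by the Blaschke factor: $g_\lambda=b_\lambda h_\lambda$ with $h_\lambda\in H^\infty$. Because $|b_\lambda|=1$ a.e. on $\mathbb T$, dividing out $b_\lambda$ preserves the boundary modulus, so $\|h_\lambda\|_\infty=\|g_\lambda\|_\infty\le\|\varphi\|_\infty+|\varphi(\lambda)|\le 2\|\varphi\|_\infty$. Applying the functional calculus to $g_\lambda=b_\lambda h_\lambda$ yields $b_\lambda(T)h_\lambda(T)=\varphi(T)-\varphi(\lambda)I$, and $b_\lambda(T)$, $h_\lambda(T)$ commute.

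Next I would use the invertibility hypothesis to control the right-hand side uniformly. Since $|\varphi(\lambda)|\le c<1/\|\varphi(T)^{-1}\|$, the operator $I-\varphi(\lambda)\varphi(T)^{-1}$ is invertible by the Neumann series with norm at most $(1-c\|\varphi(T)^{-1}\|)^{-1}$; writing $\varphi(T)-\varphi(\lambda)I=\varphi(T)(I-\varphi(\lambda)\varphi(T)^{-1})$ gives invertibility of $\varphi(T)-\varphi(\lambda)I$ with $\|(\varphi(T)-\varphi(\lambda)I)^{-1}\|\le C_0:=\|\varphi(T)^{-1}\|/(1-c\|\varphi(T)^{-1}\|)$, a constant independent of $\lambda$. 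Two commuting operators whose product is invertible are themselves invertible, so $b_\lambda(T)$ is invertible (hence $\lambda\notin\sigma(T)$ and the resolvent used above is legitimate) and $b_\lambda(T)^{-1}=h_\lambda(T)(\varphi(T)-\varphi(\lambda)I)^{-1}$. Therefore $\|b_\lambda(T)^{-1}\|\le\|h_\lambda(T)\|\,C_0\le 2M\|\varphi\|_\infty C_0=:C$, which is the desired uniform bound (the case $\lambda=0$ is identical, with $b_0(z)=z$ and $b_0(T)=T$).

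Finally, the consequences are routine. The first displayed inequality is just $\|b_\lambda(T)^{-1}\|<C$ rewritten via the identity above (enlarging $C$ slightly to make it strict). For $x\in\mathcal H$, factoring $(T-\lambda I)^{-1}x=[(T-\lambda I)^{-1}(I-\overline\lambda T)](I-\overline\lambda T)^{-1}x$ and taking norms gives $\|(T-\lambda I)^{-1}x\|\le C\|(I-\overline\lambda T)^{-1}x\|$; taking the supremum over $\|x\|=1$ and inserting $\|(I-\overline\lambda T)^{-1}\|\le M/(1-|\lambda|)$ yields $\|(T-\lambda I)^{-1}\|<CM/(1-|\lambda|)$. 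The one genuinely delicate point is the factorization step, namely recognizing that $\varphi-\varphi(\lambda)$ factors through $b_\lambda$ with sup-norm control and coupling this with the commuting-product argument; everything else is bookkeeping with the functional calculus and a Neumann series.
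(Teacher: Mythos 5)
Your proof is correct and follows essentially the same route as the paper's: factor $\varphi-\varphi(\lambda)$ by the Blaschke factor at $\lambda$ with sup-norm control, invert $\varphi(T)-\varphi(\lambda)I$ by a Neumann series uniformly in $\lambda$, deduce a uniform bound on $b_\lambda(T)^{-1}$, and finish with $(T-\lambda I)^{-1}=b_\lambda(T)^{-1}\cdot(\text{unimodular constant})\cdot(I-\overline\lambda T)^{-1}$ together with $\|(I-\overline\lambda T)^{-1}\|\leq M/(1-|\lambda|)$. The only differences are cosmetic (the normalized versus unnormalized Blaschke factor, a slightly different but equivalent constant, and your explicit commuting-product justification of invertibility, which the paper leaves implicit).
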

\begin{proof} Let $a\in\mathbb C$, and let $|a|\leq c$. Then $$(\varphi(T)-a)^{-1}=\varphi(T)^{-1}(I-a \varphi(T)^{-1})^{-1},$$ therefore,
 \begin{align}\label{E:4.5}\|(\varphi(T)-a)^{-1}\| < \frac{1}{c}\cdot\frac{1}{1-c\|\varphi(T)^{-1}\|}. \end{align}
Suppose that $\lambda\in\mathbb D$, and $|\varphi(\lambda)|\leq c$. Then $\varphi-\varphi(\lambda)=\beta_\lambda \psi$, where 
$\beta_\lambda(z)=\frac{z-\lambda}{1-\overline\lambda z}$  $\ (z\in\mathbb D)$, $\psi\in H^\infty$, and
\begin{align}\label{E:4.6}\|\psi\|_\infty=\|\beta_\lambda\psi\|_\infty=\|\varphi-\varphi(\lambda)\|_\infty
\leq \|\varphi\|_\infty+c. \end{align}
 Since \begin{align}\label{E:4.8}
\beta_\lambda(T)^{-1}=(\varphi(T)-\varphi(\lambda))^{-1}\psi(T),\end{align} we obtain from \eqref{E:4.5}, \eqref{E:4.6}, and \eqref{E:4.8}  that
\begin{equation*} 
\|\beta_\lambda(T)^{-1}\| < M\frac{ \|\varphi\|_\infty+c}{c(1-c\|\varphi(T)^{-1}\|)}=:C.\end{equation*}
The remaining part of the conclusion of the lemma follows from the equality 
\begin{equation*}
(T-\lambda I)^{-1}=\beta_\lambda(T)^{-1}(I-\overline\lambda T)^{-1}.\end{equation*} 
 \end{proof}

\begin{remark}\label{spectral} Lemma \ref{L:4.7} can be considered as a certain type of spectral mapping theorem, 
cf. {\cite[Proof of Lemma 3.1]{bercfp}},  {\cite[2.1(ii)]{makarov}}. 
It is well known that 
\begin{equation}\label{gt}  \sigma(\varphi(T))=\varphi(\sigma(T)),\end{equation}
if $\varphi$ is a function analytic in a neighbourhood of $\sigma(T)$ and $\varphi(T)$ is defined by the Riesz--Dunford functional calculus
for any (linear, bounded) operator $T$ (see, for example, {\cite[Proposition 2.5]{rara}}, {\cite[Theorem 1.2.25]{chalpart}}). 
Furthermore, \eqref{gt} holds if $T$ is a polynomially bounded operator and $\varphi$ is a function from $H^\infty$ which has 
 continuous extension on $\sigma(T)$  {\cite[Corollary 3.2]{foiasmlak}}; see \cite{jkp} for another proof. 
(For detailed definition of $\varphi(T)$, see  \cite{jkp}; decomposition on a.c. and singular parts of polynomially bounded operator is used.)
In particular, \eqref{gt} holds if $T$ is a polynomially bounded operator and $\varphi$ is a function from the disc algebra 
(that is, $\varphi$ is analytic in $\mathbb D$
 and has continuous extension on $\mathbb T$), for proof see also {\cite[Proposition IX.2.4]{sznagy}}. 
But for $H^\infty$-functional calculus for a.c. polynomially bounded operators (and even for a.c. contractions) 
it is impossible to describe   $\sigma(\varphi(T))$ in terms of $\varphi$ and $\sigma(T)$ only, see, for example,  \cite{bercfp}, \cite{makarov}, \cite{rudol} and references therein.\end{remark}

\begin{theorem}\label{T:4.8} Suppose that $T$ is an a.c. polynomially bounded operator and  $\mu$ is a finite positive  singular Borel measure on $\mathbb T$ such that $\mu(\sigma(T)\cap\mathbb T)>0$ and $\mu(\{\zeta\})=0$ 
for every $\zeta\in\sigma(T)\cap\mathbb T$. Suppose that $\theta_\mu(T)$ is invertible. 
 Then there exist $\mathcal M_1$, 
$\mathcal M_2\in\operatorname{Hlat}T$ such that $\mathcal M_1\neq\{0\}$, 
$\mathcal M_2\neq\{0\}$, and $\sigma(T|_{\mathcal M_1})\cap\sigma(T|_{\mathcal M_2})=\emptyset$.  
\end{theorem}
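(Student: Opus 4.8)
The plan is to deduce the theorem from Lemma~\ref{L:4.2}, whose conclusion is exactly the asserted statement; the whole burden is to verify the two standing hypotheses of that lemma and, above all, to show that the set $\mathcal Z$ occurring there is uncountable. I would take $\varphi=\theta_\mu$ throughout and work with the radial behaviour of $\theta_\mu$ near $\sigma(T)\cap\mathbb T$.

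First I would record the resolvent bounds needed to set up $\Lambda$. Since $T$ is polynomially bounded it is power bounded, with $\|T^n\|\le M$ for the polynomial bound $M$; the Neumann series for $|\lambda|>1$ gives $\|(T-\lambda I)^{-1}\|\le M/(|\lambda|-1)$, so the exterior estimate of Lemma~\ref{L:4.2} holds with $k=1$ and any $C\ge M$, and $\sigma(T)\subset\operatorname{clos}\mathbb D$ follows from the spectral radius estimate. Next I would apply Lemma~\ref{L:4.7} with $\varphi=\theta_\mu$: fixing $c$ with $0<c<1/\|\theta_\mu(T)^{-1}\|$ yields a constant $C_0$ with $\|(T-\lambda I)^{-1}\|<C_0M/(1-|\lambda|)$ for every $\lambda\in\mathbb D$ satisfying $|\theta_\mu(\lambda)|\le c$; in particular each such $\lambda$ lies off $\sigma(T)$. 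Choosing $C=\max(M,C_0M)$ and $k=1$ in the definition of $\Lambda$, the strict inequality shows that no $\lambda\in\mathbb D$ with $|\theta_\mu(\lambda)|\le c$ can belong to $\Lambda$, i.e. $\Lambda\cap\mathbb D\subset\{\lambda\in\mathbb D:|\theta_\mu(\lambda)|>c\}$.

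To reach $\mathcal Z$ I would use that $\theta_\mu$ has nontangential, hence radial, boundary value $0$ at $\mu$-almost every $\zeta\in\mathbb T$. For such a $\zeta$ there is $r_\zeta<1$ with $|\theta_\mu(r\zeta)|<c$ for \emph{all} $r\in(r_\zeta,1)$, so by the previous step $r\zeta\notin\Lambda$ for all $r\in(r_\zeta,1)$, whence $\sup\{r\in[0,1):r\zeta\in\Lambda\}\le r_\zeta<1$. Thus every $\zeta\in\sigma(T)\cap\mathbb T$ at which the radial limit of $\theta_\mu$ vanishes already lies in $\mathcal Z$.

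Finally I would establish uncountability. The set of $\zeta$ where the radial limit vanishes has full $\mu$-measure, so its intersection with $\sigma(T)\cap\mathbb T$ has $\mu$-measure $\mu(\sigma(T)\cap\mathbb T)>0$ and is contained in $\mathcal Z$. Because $\mu(\{\zeta\})=0$ for every $\zeta\in\sigma(T)\cap\mathbb T$, any countable subset of $\sigma(T)\cap\mathbb T$ would be $\mu$-null; a set of positive $\mu$-measure must therefore be uncountable, so $\mathcal Z$ is uncountable and Lemma~\ref{L:4.2} delivers $\mathcal M_1,\mathcal M_2\in\operatorname{Hlat}T$ with disjoint spectra. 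The step I expect to require the most care is the bookkeeping in the second paragraph---arranging a single pair $(C,k)$ that simultaneously controls the resolvent outside $\operatorname{clos}\mathbb D$ and forces $\Lambda$ to avoid the sublevel set $\{|\theta_\mu|\le c\}$---together with the passage from ``$|\theta_\mu|\le c$ keeps $\lambda$ off $\Lambda$'' to ``radial segments eventually leave $\Lambda$'', which hinges on the boundary value being a genuine limit rather than a limit along a subsequence.
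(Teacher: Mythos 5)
Your proposal is correct and follows essentially the same route as the paper's own proof: Lemma~\ref{L:4.7} applied to $\varphi=\theta_\mu$ to keep the sublevel set $\{|\theta_\mu|\le c\}$ out of $\Lambda$, the nontangential vanishing of $\theta_\mu$ at $\mu$-a.e.\ point to place $\mu$-almost all of $\sigma(T)\cap\mathbb T$ in $\mathcal Z$, uncountability from positive nonatomic $\mu$-measure, and then Lemma~\ref{L:4.2}. The extra bookkeeping you supply (the Neumann-series exterior estimate and the choice $C=\max(M,C_0M)$, $k=1$) is exactly what the paper leaves implicit.
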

\begin{proof} By {\cite[Theorem II.6.2]{garn}}, $\theta_\mu(z)\to 0$ when $z\to\zeta$ nontangentially for a.e. $\zeta\in\mathbb T$ 
with respect to 
$\mu$. Since $\mu(\sigma(T)\cap\mathbb T)>0$ and $\mu(\{\zeta\})=0$ for every $\zeta\in\sigma(T)\cap\mathbb T$, the set 
$$\mathcal Z_0=\bigl\{\zeta\in\sigma(T)\cap\mathbb T: \ \ \theta_\mu(z)\to 0 \ \text{ when } \ z\to\zeta \ \text{ nontangentially}\bigr\}$$ 
is uncountable.  Let $0<c<1/\|\theta_\mu(T)^{-1}\|$, and let  $C$ be defined by $c$ in Lemma~\ref{L:4.7}.  Set 
 $$\Lambda =\sigma(T)\cup\bigl\{\lambda\in\mathbb D: \ \ \|(T-\lambda I)^{-1}\|\geq CM/(1-|\lambda|)\bigr\}$$ and
$$\mathcal Z=\bigl\{\zeta\in\sigma(T)\cap\mathbb T: \ \ \sup\{r\in [0,1): r\zeta\in\Lambda\}<1\bigr\}.$$
By Lemma~\ref{L:4.7}, we have $\mathcal Z_0\subset\mathcal Z$ and so $\mathcal Z$ is uncountable.
 The conclusion of the theorem follows from Lemma~\ref{L:4.2}. 
 \end{proof}

For $\zeta_1$, $\zeta_2\in\mathbb T$, $\zeta_1\neq\zeta_2$, denote by $J_{\zeta_1,\zeta_2}$ the closed subarc of $\mathbb T$ 
with endpoints $\zeta_1$, $\zeta_2$ which connects $\zeta_1$ with $\zeta_2$ in counter clockwise order.

\begin{theorem}\label{tcircle} Suppose that $T$ is an a.c. polynomially bounded operator, $\sigma(T)=\mathbb T$, 
$\zeta_1$, $\zeta_2\in\mathbb T$, $\zeta_1\neq\zeta_2$,  $\mu$ is a finite positive singular Borel measure on $\mathbb T$ 
 such that $\mu(\{\zeta_l\})>0$ for 
$l=1,2$, and $\theta_\mu(T)$ is invertible.  Then there exist $\mathcal M$, 
$\mathcal M'\in\operatorname{Hlat}T$ such that $\mathcal M\neq\{0\}$, 
$\mathcal M'\neq\{0\}$,  $$\sigma(T|_{\mathcal M})\cap\mathbb T\subset J_{\zeta_1,\zeta_2} \text{ and }
\sigma(T|_{\mathcal M'})\cap\mathbb T\subset J_{\zeta_2,\zeta_1}.$$
\end{theorem}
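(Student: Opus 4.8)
The plan is to apply Lemma~\ref{L:4.1} to a pair of simple closed rectifiable curves $\Gamma$, $\Gamma'$, each crossing $\mathbb T$ exactly at $\zeta_1$ and $\zeta_2$, and placed on opposite sides of the chord joining $\zeta_1$ and $\zeta_2$. Let $\ell$ be the line through $\zeta_1$, $\zeta_2$; it divides $\mathbb C$ into two open half-planes, one of which, say $H^-$, contains the open arc $J_{\zeta_2,\zeta_1}\setminus\{\zeta_1,\zeta_2\}$, while the other, $H^+$, contains $J_{\zeta_1,\zeta_2}\setminus\{\zeta_1,\zeta_2\}$ (each open arc lies entirely in one half-plane because $\mathbb T$ meets $\ell$ only at $\zeta_1,\zeta_2$). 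I would build $\Gamma$ inside $\operatorname{clos}H^-$ and $\Gamma'$ inside $\operatorname{clos}H^+$, so that \eqref{E:4.2} holds automatically: $\Gamma\setminus\{\zeta_1,\zeta_2\}\subset H^-$ is disjoint from $\Gamma'\cup\Omega_{\Gamma'}\subset\operatorname{clos}H^+$, and symmetrically.

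First I would localize the smallness of $\theta_\mu$. Writing $s_l=\mu(\{\zeta_l\})>0$ and using \eqref{thetamu}, one has $|\theta_\mu(z)|=\exp\bigl(-\int_{\mathbb T}\tfrac{1-|z|^2}{|z-\zeta|^2}\,\mathrm d\mu(\zeta)\bigr)$; keeping only the atom at $\zeta_l$ and invoking Lemma~\ref{adelta} gives $|\theta_\mu(z)|<\exp\bigl(-\tfrac{r}{1-r}s_l\bigr)$ for $z\in D(r\zeta_l)$. Fix $0<c<1/\|\theta_\mu(T)^{-1}\|$, let $C$ be the constant furnished by Lemma~\ref{L:4.7}, and choose $r\in(0,1)$ so close to $1$ that $\exp\bigl(-\tfrac{r}{1-r}s_l\bigr)\leq c$ for $l=1,2$. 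Then $|\theta_\mu|\leq c$ on $D(r\zeta_1)\cup D(r\zeta_2)$, so Lemma~\ref{L:4.7} yields $\|(T-\lambda I)^{-1}\|<CM/(1-|\lambda|)$ there. For the exterior I would use that $T$ is polynomially bounded with bound $M$, whence $\|T^n\|\leq M$ and the Neumann series gives $\|(T-\lambda I)^{-1}\|\leq M/(|\lambda|-1)$ for every $|\lambda|>1$.

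Next I would construct $\Gamma=\gamma_{\mathrm{in}}\cup\gamma_{\mathrm{out}}$ in $\operatorname{clos}H^-$: an inner arc $\gamma_{\mathrm{in}}$ from $\zeta_1$ to $\zeta_2$ contained, apart from its endpoints, in $\mathbb D\cap H^-$, entering $D(r\zeta_1)$ and $D(r\zeta_2)$ along nontangential segments near the endpoints and otherwise staying in a compact subset of $\mathbb D$; and an outer arc $\gamma_{\mathrm{out}}$ from $\zeta_2$ to $\zeta_1$ contained, apart from its endpoints, in $\{|z|>1\}\cap H^-$, meeting $\mathbb T$ nontangentially at the endpoints. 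Then $\Gamma\cap\mathbb T=\{\zeta_1,\zeta_2\}$ and \eqref{E:4.1} holds. The decisive observation is that condition (i) of Lemma~\ref{L:4.1} is delicate only where $|\lambda|\to1$, i.e.\ near $\zeta_1,\zeta_2$: on $\gamma_{\mathrm{in}}$ near the endpoints the horodisc bound applies, on $\gamma_{\mathrm{out}}$ near the endpoints the exterior bound applies (both with $k=1$), while on the remaining compact part of $\Gamma$ one has $\bigl|1-|\lambda|\bigr|$ bounded below and $\|(T-\lambda I)^{-1}\|$ bounded (since $\sigma(T)=\mathbb T$), so the estimate holds after enlarging $C$. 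Since $\Gamma\subset\operatorname{clos}H^-$ touches $\ell$ only at $\zeta_1,\zeta_2$, the bounded component $\Omega_\Gamma$ lies in $H^-$ and encloses $J_{\zeta_2,\zeta_1}\setminus\{\zeta_1,\zeta_2\}$; hence $\sigma(T)\cap\Omega_\Gamma\neq\emptyset$, giving (ii). Taking $\Gamma'$ to be the mirror construction in $\operatorname{clos}H^+$, with $\Omega_{\Gamma'}\supset J_{\zeta_1,\zeta_2}\setminus\{\zeta_1,\zeta_2\}$, completes the pair.

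Finally I would conclude with Lemma~\ref{L:4.1}, which produces nonzero $\mathcal M,\mathcal M'\in\operatorname{Hlat}T$ with $\sigma(T|_{\mathcal M})\subset\mathbb C\setminus\Omega_\Gamma$ and $\sigma(T|_{\mathcal M'})\subset\mathbb C\setminus\Omega_{\Gamma'}$. As in the proof of Lemma~\ref{L:4.2}, since $\sigma(T)=\mathbb T$ its polynomially convex hull is $\operatorname{clos}\mathbb D$, so $\sigma(T|_{\mathcal M})\subset\operatorname{clos}\mathbb D\setminus\Omega_\Gamma$, and therefore $\sigma(T|_{\mathcal M})\cap\mathbb T\subset\mathbb T\setminus\Omega_\Gamma=J_{\zeta_1,\zeta_2}$; symmetrically $\sigma(T|_{\mathcal M'})\cap\mathbb T\subset J_{\zeta_2,\zeta_1}$, as required. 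The main obstacle I anticipate is the explicit curve construction, namely realizing simultaneously the half-plane placement needed for \eqref{E:4.2}, the correct enclosed arc for condition (ii), and the nontangential entry of $\gamma_{\mathrm{in}}$ into the horodiscs $D(r\zeta_l)$ at the endpoints; the rest of the argument is then forced by the observation that condition (i) is nontrivial only at $\zeta_1,\zeta_2$.
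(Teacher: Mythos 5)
Your proposal is correct and follows essentially the same route as the paper: Lemma~\ref{adelta} to place horodiscs $D(r\zeta_l)$ inside $\{|\theta_\mu|\leq c\}$, Lemma~\ref{L:4.7} for the resolvent bound there (with the Neumann series outside $\operatorname{clos}\mathbb D$ and boundedness of the resolvent on compact subsets of $\mathbb D$, since $\sigma(T)=\mathbb T$), and then Lemma~\ref{L:4.1} applied to two curves crossing $\mathbb T$ at $\zeta_1,\zeta_2$ and enclosing the two complementary arcs. The paper leaves the curve construction and the verification of \eqref{E:4.1}, \eqref{E:4.2} as an assertion; your half-plane placement and endpoint analysis supply exactly the details it omits.
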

\begin{proof} Let $0<c<1/\|\theta_\mu(T)^{-1}\|$. By Lemma \ref{adelta}, there exists $0<r<1$ such that 
$$D(r\zeta_l)\subset\{\lambda\in\mathbb D \ :\  |\theta_\mu(\lambda)|\leq c\} \ \ (l=1,2).$$ 
Taking into account Lemma \ref{L:4.7} and the assumption that $\sigma(T)\subset\mathbb T$, 
we conclude that there exist two simple closed rectifiable curves $\Gamma$ and $\Gamma '$ such that 
$\Gamma$ and $\Gamma '$ satisfy \eqref{E:4.1} with $$\zeta_{\Gamma 1}=\zeta_{\Gamma '2}=\zeta_2, \ 
\zeta_{\Gamma 2}=\zeta_{\Gamma '1}=\zeta_1, \ \ J_{\zeta_2,\zeta_1}\setminus\{\zeta_1,\zeta_2\}\subset\Omega_\Gamma, \ J_{\zeta_1,\zeta_2}\setminus\{\zeta_1,\zeta_2\}\subset\Omega_{\Gamma '},$$
  and $$(\Gamma\cup\Gamma ')\cap\mathbb D\subset\{\lambda\in\mathbb D \ : \|(T-\lambda I) ^{-1}\|\leq C/(1-|\lambda|)\}$$ 
for some $C>0$.
 It is easy to see that $\Gamma$, $\Gamma '$ and $T$ satisfy the assumptions of Lemma \ref{L:4.1}. 
The conclusion of the theorem follows from the conclusion of Lemma \ref{L:4.1}.\end{proof}

\begin{theorem}\label{tarc} Suppose that $T$ is an a.c. polynomially bounded operator, 
$\zeta_1$, $\zeta_2\in\mathbb T$, $\zeta_1\neq\zeta_2$,  $\sigma(T)=J_{\zeta_1,\zeta_2}$, 
$\zeta_0\in J_{\zeta_1,\zeta_2}\setminus\{\zeta_1,\zeta_2\}$,  $\mu$ is a finite positive singular Borel measure on $\mathbb T$ 
 such that $\mu(\{\zeta_0\})>0$, and $\theta_\mu(T)$ is invertible.  Then there exist $\mathcal M$, 
$\mathcal M'\in\operatorname{Hlat}T$ such that $\mathcal M\neq\{0\}$, 
$\mathcal M'\neq\{0\}$,  $$\sigma(T|_{\mathcal M})\subset J_{\zeta_0,\zeta_2} \text{ and }
\sigma(T|_{\mathcal M'})\subset J_{\zeta_1,\zeta_0}.$$
\end{theorem}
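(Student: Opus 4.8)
The plan is to follow the scheme of the proof of Theorem~\ref{tcircle}, using the single atom at $\zeta_0$ to furnish one controlled crossing of $\mathbb T$ and exploiting the fact that now $\sigma(T)=J_{\zeta_1,\zeta_2}$ is a \emph{proper} arc, so that the second crossing may be taken in the spectrum-free complementary arc $J_{\zeta_2,\zeta_1}$.

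First I would fix $0<c<1/\|\theta_\mu(T)^{-1}\|$. Since $\mu(\{\zeta_0\})>0$, Lemma~\ref{adelta} gives $0<r<1$ with $D(r\zeta_0)\subset\{\lambda\in\mathbb D:|\theta_\mu(\lambda)|\leq c\}$, whence Lemma~\ref{L:4.7} yields $\|(T-\lambda I)^{-1}\|<CM/(1-|\lambda|)$ on $D(r\zeta_0)$ for a suitable $C$. Three further bounds are available with no hypothesis on $\mu$: the resolvent is bounded on $\{|\lambda|\leq\rho\}$ for each $\rho<1$ (because $\sigma(T)\subset\mathbb T$), it is bounded in a neighbourhood of every point of the open arc $J_{\zeta_2,\zeta_1}$ (such points lie off $\sigma(T)$), and polynomial boundedness gives $\|(T-\lambda I)^{-1}\|\leq M/(|\lambda|-1)$ for all $|\lambda|>1$.

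Next I would fix any $\eta\in J_{\zeta_2,\zeta_1}$ and construct two simple closed rectifiable curves $\Gamma,\Gamma'$, each crossing $\mathbb T$ exactly at $\zeta_0$ and $\eta$ along radial (hence nontangential) segments so that \eqref{E:4.1} holds, with $\Omega_\Gamma\supset J_{\zeta_1,\zeta_0}\setminus\{\zeta_0\}$ and $\Omega_{\Gamma'}\supset J_{\zeta_0,\zeta_2}\setminus\{\zeta_0\}$. The crucial observation, exactly as in Theorem~\ref{tcircle}, is that enclosing a spectral sub-arc does \emph{not} force the curve to run just inside $\mathbb T$ along that sub-arc, where the resolvent is uncontrolled. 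Instead, near $\zeta_0$ each curve crosses through $D(r\zeta_0)$; its part inside $\mathbb D$ then dips into $\{|\lambda|\leq\rho\}$ and returns to $\mathbb T$ at $\eta$, while its part in $\{|\lambda|\geq1\}$ arcs just outside $\mathbb T$ over the relevant sub-arc. The enclosed sub-arc thus lies in the \emph{open} region $\Omega_\Gamma$ rather than on $\Gamma$, so the four bounds above suffice to place $(\Gamma\cup\Gamma')\cap\mathbb D$ inside $\{\lambda\in\mathbb D:\|(T-\lambda I)^{-1}\|\leq C/(1-|\lambda|)\}$. Drawing $\Gamma$ on the $\zeta_1$-side and $\Gamma'$ on the $\zeta_2$-side of the cut through $\zeta_0$ and $\eta$, so that they meet only at these two points, makes the pair satisfy \eqref{E:4.2}, and clearly $\sigma(T)\cap\Omega_\Gamma\neq\emptyset$, $\sigma(T)\cap\Omega_{\Gamma'}\neq\emptyset$. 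This construction is the step I expect to demand the most care: one must draw $\Gamma,\Gamma'$ enclosing the prescribed sub-arcs (including the endpoints $\zeta_1,\zeta_2$ of $\sigma(T)$, each abutting the spectrum-free arc), meeting only at $\zeta_0,\eta$, and keeping their interior portions in the controlled region, all while respecting \eqref{E:4.1} and \eqref{E:4.2}.

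Lemma~\ref{L:4.1} then applies to $(\Gamma,\Gamma')$ and produces nonzero $\mathcal M,\mathcal M'\in\operatorname{Hlat}T$ with $\sigma(T|_{\mathcal M})\subset\mathbb C\setminus\Omega_\Gamma$ and $\sigma(T|_{\mathcal M'})\subset\mathbb C\setminus\Omega_{\Gamma'}$. To convert these into the asserted inclusions I would invoke {\cite[Theorem 0.8]{rara}}: the arc $\sigma(T)=J_{\zeta_1,\zeta_2}$ has connected complement in $\mathbb C$, hence is polynomially convex, so $\sigma(T|_{\mathcal M}),\sigma(T|_{\mathcal M'})\subset J_{\zeta_1,\zeta_2}$. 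Intersecting with $\mathbb C\setminus\Omega_\Gamma$ and $\mathbb C\setminus\Omega_{\Gamma'}$ and using the two enclosures gives $\sigma(T|_{\mathcal M})\subset J_{\zeta_1,\zeta_2}\setminus(J_{\zeta_1,\zeta_0}\setminus\{\zeta_0\})=J_{\zeta_0,\zeta_2}$ and $\sigma(T|_{\mathcal M'})\subset J_{\zeta_1,\zeta_0}$, as required. Apart from the curve construction, the argument is a transcription of the proof of Theorem~\ref{tcircle}; the one genuinely new ingredient is this use of polynomial convexity of the arc, which upgrades the conclusion from a statement about $\sigma(T|_{\mathcal M})\cap\mathbb T$ to one about all of $\sigma(T|_{\mathcal M})$.
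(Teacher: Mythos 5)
Your proposal is correct and follows essentially the same route as the paper's proof: the resolvent bound from Lemma~\ref{L:4.7} on $D(r\zeta_0)$, two curves crossing $\mathbb T$ at $\zeta_0$ and in the spectrum-free complementary arc, Lemma~\ref{L:4.1}, and polynomial convexity of the arc via {\cite[Theorem 0.8]{rara}} to upgrade the conclusion. The only difference is that you spell out the curve construction and the off-spectrum resolvent estimates, which the paper compresses into ``it is easy to see.''
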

\begin{proof} Let $0<c<1/\|\theta_\mu(T)^{-1}\|$. By Lemma \ref{adelta}, there exists $0<r<1$ such that 
$$D(r\zeta_0)\subset\{\lambda\in\mathbb D \ :  \ |\theta_\mu(\lambda)|\leq c\}.$$ 
Taking into account Lemma \ref{L:4.7} and the assumption that  $\sigma(T)=J_{\zeta_1,\zeta_2}$, we conclude that there exist 
 two simple closed rectifiable curves $\Gamma$ and $\Gamma '$ such that 
$\Gamma$ and $\Gamma '$ satisfy \eqref{E:4.1} with 
$$ \zeta_{\Gamma 2}=\zeta_{\Gamma '1}=\zeta_0,  \ \ \   
 J_{\zeta_1,\zeta_0}\setminus\{\zeta_0\}\subset\Omega_\Gamma, 
\ J_{\zeta_0,\zeta_2}\setminus\{\zeta_0\}\subset\Omega_{\Gamma '},$$
  and $$(\Gamma\cup\Gamma ')\cap\mathbb D\subset\{\lambda\in\mathbb D \ : \|(T-\lambda I) ^{-1}\|\leq C/(1-|\lambda|)\}$$ 
for some $C>0$.
It is easy to see that $\Gamma$, $\Gamma '$ and $T$ satisfy the assumptions of Lemma \ref{L:4.1}. Let $\mathcal M$ and $\mathcal M'$ 
be the subspaces from the conclusion of Lemma \ref{L:4.1}. We have that $\sigma(T|_{\mathcal M})\subset\mathbb C\setminus\Omega_\Gamma$,  and 
$\sigma(T|_{\mathcal M '})\subset\mathbb C\setminus\Omega_{\Gamma '}$.
By {\cite[Theorem 0.8]{rara}}, the spectrum of the restriction of an operator on its invariant subspace is contained in the polynomially 
convex hull of  its spectrum. Therefore, $\sigma(T|_{\mathcal M})$, $\sigma(T|_{\mathcal M '})\subset J_{\zeta_1,\zeta_2}$. 
Consequently, $\sigma(T|_{\mathcal M})\subset (\mathbb C\setminus\Omega_\Gamma)\cap J_{\zeta_1,\zeta_2}=J_{\zeta_0,\zeta_2}$. Similarly, 
$\sigma(T|_{\mathcal M'})\subset J_{\zeta_1,\zeta_0}$.\end{proof}

\begin{remark}\label{remarkexample}Let $T$ be an a.c. unitary operator, and let $\theta$ be an inner function. 
 Then $\theta(T)$ is a unitary operator, in particular, $\theta(T)$ is invertible. In the next section,  
 examples of nonunitary operators satisfying the assumptions of Theorem \ref{T:4.8} are given (Example \ref{exa28}).  
 \end{remark}

\begin{remark}\label{remarksupp}As it will turn out in the beginning of the next section, operators satisfying the assumptions of Theorem \ref{T:4.8} 
cannot be  quasianalytic. But if one replaces the assumption 
``$\mu(\sigma(T)\cap\mathbb T)>0$" of Theorem \ref{T:4.8} by 
``$\operatorname{supp}\mu\cap\sigma(T)\neq\emptyset$",   where $\operatorname{supp}\mu$  is the \emph{closed} support of $\mu$, then the operator 
satisfying  these modified assumptions  can be  quasianalytic, see Example \ref{exa27} below.
 \end{remark}

\section{Quasianalyticity and invertibility of inner functions of operators}

For the definition of quasianalyticity  we refer to \cite{ker16} or \cite{ker01}  
and to \cite{esterle},   \cite{ker01}, \cite{kerszalai14},  \cite{kerszalai15},  \cite{ker16}, \cite{gamal}
 for examples of quasianalytic  operators. 
We recall only that the set $\pi(T)$ of quasianalyticity of an operator $T$ is a Borel set, $\pi(T)\subset\sigma(T)\cap\mathbb T$, and 
if $T$ is a quasianalytic operator, then $\pi(T)\neq\emptyset$. Furthermore, 
$\pi(T)\subset\sigma(T|_{\mathcal M})$ for every nonzero  $\mathcal M\in\operatorname{Lat}T$ for an a.c. polynomially bounded operator $T$ {\cite[Proposition 35]{ker16}}.  
Therefore, 
operators satisfying the assumptions of Theorem \ref{T:4.8} cannot be quasianalytic. In  this section, examples of quasianalytic 
operators $Q$ are given such that $\theta_\mu(Q)$ is invertible with $\mu(\sigma(Q)\cap\mathbb T)>0$ for a  pure atomic measure $\mu$,  
and also $B(Q)$ is invertible for a Blaschke product $B$ with $\{\zeta\in\mathbb T \ :$ $\ \zeta$ is  a nontangential  limit of zeros of $B\}\subset\sigma(Q)$.

We need the following simple lemma. Recall that for $\lambda\in\mathbb D$ the set $D(\lambda)$ is defined in \eqref{drzeta}. 

\begin{lemma}\label{lemmag} Let $\{\zeta_n\}_{n\in \mathbb N}$ be a sequence of points from $\mathbb T$ such that  
$\zeta_n\neq\zeta_l$ for $n\neq l$. Then there exist 
$\{r_n\}_{n\in \mathbb N}$, $\{a_n\}_{n\in \mathbb N}$ and $\Lambda\subset\mathbb D$ 
such that $0<r_n<1$, $a_n>0$, $\sum_{n\in \mathbb N}(1-r_n)<\infty$, 
$\sum_{n\in \mathbb N}a_n<\infty$, 
$$\mathcal G:=\mathbb D\setminus\cup_{n\in \mathbb N}\operatorname{clos}D(r_n\zeta_n)$$ 
is a simply connected domain,  $B:=\prod_{\lambda\in\Lambda} b_\lambda$ is an interpolating Blaschke product, 
 $ \zeta_n$ is  a nontangential  limit of points from $\Lambda$ for every $n\in\mathbb N$, 
$$\inf_{\mathcal G}|\theta_\mu|>0 \ \text{ with  }  \mu=\sum_{n\in \mathbb N}a_n\delta_{\zeta_n},\ \ \ \text{ and } \ \ \inf_{\mathcal G}|B|>0.$$ 
\end{lemma}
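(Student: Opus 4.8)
The plan is to build everything from a single family of rapidly shrinking, pairwise disjoint horoballs at the points $\zeta_n$. First I would fix radii $0<r_n<1$ with $1-r_n\le 2^{-n}$, chosen inductively close enough to $1$ that the closed discs $\operatorname{clos}D(r_n\zeta_n)$ are pairwise disjoint (possible since the $\zeta_n$ are distinct and $D(r_n\zeta_n)$ is the disc tangent to $\mathbb T$ at $\zeta_n$ of radius $1-r_n$). Then $\mathcal G=\mathbb D\setminus\bigcup_n\operatorname{clos}D(r_n\zeta_n)$ is open; it is connected because the removed discs are disjoint and each meets $\mathbb T$ only at $\zeta_n$, so any path can be rerouted around the finitely many discs of radius $\ge\delta$ that it meets and left untouched near the others; and it is simply connected because its complement in the Riemann sphere, namely $(\widehat{\mathbb C}\setminus\mathbb D)\cup\bigcup_n\operatorname{clos}D(r_n\zeta_n)$, is connected (each horoball is attached to the connected set $\widehat{\mathbb C}\setminus\mathbb D$ at the point $\zeta_n\in\mathbb T$). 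This geometric/topological part I regard as routine.

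For the singular inner function I would use $\operatorname{Re}\frac{z+\zeta_n}{z-\zeta_n}=-\frac{1-|z|^2}{|z-\zeta_n|^2}$, which gives, with $\mu=\sum_n a_n\delta_{\zeta_n}$,
\[\log|\theta_\mu(z)|=-\sum_{n}a_n\frac{1-|z|^2}{|z-\zeta_n|^2}.\]
For $z\in\mathcal G$ we have $z\notin D(r_n\zeta_n)$ for every $n$, so Lemma~\ref{adelta} bounds the $n$-th term by $a_n r_n/(1-r_n)\le a_n/(1-r_n)$. Choosing $a_n=(1-r_n)2^{-n}$ makes $\sum_n a_n<\infty$ and $\sum_n a_n/(1-r_n)=\sum_n 2^{-n}<\infty$, whence $\inf_{\mathcal G}|\theta_\mu|\ge\exp(-\sum_n 2^{-n})>0$.

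For the Blaschke product I would place radial zeros $\lambda_{n,k}=(1-t_{n,k})\zeta_n$ with $t_{n,k}=\epsilon_n4^{-k}$, $k\ge0$, and $\epsilon_n=(1-r_n)2^{-n}$; then $\lambda_{n,k}\to\zeta_n$ radially (so $\zeta_n$ is a nontangential limit of $\Lambda=\{\lambda_{n,k}\}$), $\sum_{n,k}(1-|\lambda_{n,k}|)=\tfrac43\sum_n\epsilon_n<\infty$, and since $t_{n,k}\le1-r_n$ one has $D(\lambda_{n,k})\subset D(r_n\zeta_n)$. The key estimate is
\[1-|b_{\lambda_{n,k}}(z)|^2=\frac{(1-|\lambda_{n,k}|^2)(1-|z|^2)}{|1-\overline{\lambda_{n,k}}z|^2}\le\frac{2t_{n,k}(1-|z|^2)}{|1-\overline{\lambda_{n,k}}z|^2},\]
together with the two elementary lower bounds $|1-\overline{\lambda_{n,k}}z|\ge\operatorname{Re}(1-\overline{\lambda_{n,k}}z)\ge t_{n,k}$ and $|1-\overline{\lambda_{n,k}}z|\ge|\zeta_n-z|-t_{n,k}$, which combine to $|1-\overline{\lambda_{n,k}}z|\ge\tfrac12\max(t_{n,k},|\zeta_n-z|)$. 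Summing over $k$ and using Lemma~\ref{adelta} (i.e.\ $1-|z|^2\le|\zeta_n-z|^2/(1-r_n)$ when $z\notin D(r_n\zeta_n)$) I expect the clean per-point bound $\sum_{k}(1-|b_{\lambda_{n,k}}(z)|^2)\le C\,\epsilon_n/(1-r_n)$, valid for every $z\notin D(r_n\zeta_n)$. Summing over $n$ gives $\sum_{\lambda\in\Lambda}(1-|b_\lambda(z)|^2)\le C\sum_n 2^{-n}<\infty$ uniformly on $\mathcal G$; since $1-|b_\lambda(z)|\le 1-|b_\lambda(z)|^2$ and each factor satisfies $|b_{\lambda_{n,k}}(z)|\ge1/3$ on $\mathcal G$ by Lemma~\ref{lemmabb}, concavity of $\log$ on $[1/3,1]$ yields $\inf_{\mathcal G}|B|>0$.

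The step I expect to be the main obstacle is verifying that $\Lambda$ is interpolating, i.e.\ the Carleson condition $\inf_{(n,k)}\prod_{(n',k')\ne(n,k)}|b_{\lambda_{n',k'}}(\lambda_{n,k})|>0$; a priori this is delicate when the $\zeta_n$ cluster, so that zeros from different families come close. The resolution is that the very same estimate applies at the evaluation point: the same-family factors ($n'=n$) form a geometric sequence on a radius, so $\sum_{k'\ne k}(1-|b_{\lambda_{n,k'}}(\lambda_{n,k})|)$ is bounded by a universal constant, while for the cross-family factors ($n'\ne n$) the point $\lambda_{n,k}$ lies in $D(r_n\zeta_n)$ and hence \emph{outside} every other horoball $D(r_{n'}\zeta_{n'})$, so the bound of the previous paragraph applies verbatim and gives $\sum_{n'\ne n,\,k'}(1-|b_{\lambda_{n',k'}}(\lambda_{n,k})|^2)\le C\sum_{n'}2^{-n'}$. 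As all factors are again $\ge1/3$ and $1-|b|\le 1-|b|^2$, the product is bounded below uniformly in $(n,k)$, so $B$ is an interpolating Blaschke product. This disjointness of the horoballs is exactly what reconciles the uniform lower bound on $\mathcal G$ with the interpolation condition.
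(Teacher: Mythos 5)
Your proof is correct, and the first two thirds of it coincide with the paper's: the inductive choice of radii making the closed discs $\operatorname{clos}D(r_n\zeta_n)$ pairwise disjoint, and the choice of $a_n$ with $\sum_n a_n/(1-r_n)<\infty$ so that Lemma~\ref{adelta} gives $\inf_{\mathcal G}|\theta_\mu|>0$, are exactly what the paper does. Where you genuinely diverge is the interpolating Blaschke product. The paper starts from a single radial sequence $\{\rho_n\}$ with $\sup_n\frac{1-\rho_{n+1}}{1-\rho_n}<1$, invokes the cited theorem that $\prod_n b_{\rho_n\xi_n}$ is then interpolating for \emph{arbitrary} arguments $\xi_n$ (so that clustering of the $\zeta_n$ is a non-issue by citation, and subproducts remain interpolating), distributes these moduli among the rays $[r_n\zeta_n,\zeta_n)$, and gets $\inf_{\mathcal G}|B|\geq c/3$ from the characterization $|B_\Lambda(z)|\geq c\inf_\lambda|b_\lambda(z)|$ combined with Lemma~\ref{lemmabb}. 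You instead put an explicit geometric sequence of zeros on each ray, with per-ray Blaschke mass $\epsilon_n=(1-r_n)2^{-n}$ confined to the horoball $D(r_n\zeta_n)$, and verify both $\inf_{\mathcal G}|B|>0$ and the Carleson separation condition by directly summing $1-|b_\lambda(z)|^2$, using the disjointness of the horoballs to control the cross-family terms. Your computations check out (the per-ray bound $\sum_k(1-|b_{\lambda_{n,k}}(z)|^2)\leq C\epsilon_n/(1-r_n)$ for $z\notin D(r_n\zeta_n)$ is correct and does double duty at the evaluation points $\lambda_{n,k}$), and your route is more self-contained, at the price of relying on Carleson's theorem to pass from the separation condition to the interpolation property, whereas the paper's route is shorter given its references. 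One small point to make explicit: in the separation step you assert all factors are $\geq 1/3$, but for the same-family factors this does not come from Lemma~\ref{lemmabb} (each $\lambda_{n,k}$ with $k>k'$ lies \emph{inside} $D(\lambda_{n,k'})$); it comes from the geometric ratio, which gives $|b_{\lambda_{n,k'}}(\lambda_{n,k})|\geq\frac{1-4^{-|k-k'|}}{1+4^{-|k-k'|}}\geq\frac{3}{5}$.
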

\begin{proof} The domain $\mathcal G$ is constructed by induction. Take $\{r_{1,n}\}_{n\in \mathbb N}$ such that 
$0<r_{1,n}<1$ and  $\sum_{n\in \mathbb N}(1-r_{1,n})<\infty$. Take $r_1\geq r_{1,1}$. Since 
$\zeta_2\not\in\operatorname{clos}D(r_1\zeta_1)$, there exists $r_2\geq r_{1,2}$ such that 
$$\operatorname{clos}D(r_1\zeta_1)\cap\operatorname{clos}D(r_2\zeta_2)=\emptyset.$$
Since 
$$\zeta_3\not\in\operatorname{clos}D(r_1\zeta_1)\cup\operatorname{clos}D(r_2\zeta_2),$$
 there exists $r_3\geq r_{1,3}$ such that
$$\operatorname{clos}D(r_3\zeta_3)\cap(\operatorname{clos}D(r_1\zeta_1)\cup\operatorname{clos}D(r_2\zeta_2))=\emptyset,$$
and so on. 

After $\{r_n\}_{n\in \mathbb N}$ has been constructed, take   $\{a_n\}_{n\in \mathbb N}$ such that $a_n>0$ and 
$$\sum_{n\in \mathbb N}a_n\frac{r_n}{1-r_n}<\infty.$$ By Lemma \ref{adelta}, $\theta_\mu$ with $\mu=\sum_{n\in \mathbb N}a_n\delta_{\zeta_n}$ 
satisfies the conclusion of the lemma. 

Before   constructing an interpolating Blaschke product we recall the definition and the needed facts.
Let $\Lambda\subset\mathbb D$ be such that $\sum_{\lambda\in\Lambda}(1-|\lambda|)<\infty$. 
Set $B_\Lambda=\prod_{\lambda\in\Lambda} b_\lambda$. Then $B_\Lambda$ is a Blaschke product with simple zeros. 
$B_\Lambda$ is called interpolating, if for every $\alpha\in\ell^\infty$ 
there exists $\varphi\in H^\infty$ such that $\varphi|_\Lambda=\alpha$. 
 It is known that $B_\Lambda$ is an interpolating Blaschke product if and only if there exists $c>0$ such that 
\begin{equation}\label{einterb}|B_\Lambda(z)|\geq c\inf_{\lambda\in\Lambda}|b_\lambda(z)|\ \ \text{ for every } z\in\mathbb D\end{equation}
(see, for example, {\cite[Secs. C.3.2.15--C.3.2.18]{nik}}). It follows exactly from the definition 
that if $B_\Lambda$ is an interpolating Blaschke product, then  $B_{\Lambda '}$ 
 is an interpolating Blaschke product, too, for every 
$\Lambda '\subset\Lambda$. 

Let $\Lambda\subset\cup_{n\in \mathbb N}[r_n\zeta_n,\zeta_n)$ be such that $B_\Lambda$ 
is an interpolating Blaschke product. It follows from Lemma \ref{lemmabb}, \eqref{einterb} and the definition of $\mathcal G$ that 
 $\inf_{\mathcal G}|B_\Lambda|\geq\frac{c}{3}$.

To construct Blaschke product, take $\{\rho_n\}_{n\in \mathbb N}$ such that $0<\rho_n<1$ 
and $\sup_{n\in \mathbb N}\frac{1-\rho_{n+1}}{1-\rho_n}<1.$ Then $\prod_{n\in \mathbb N}b_{\rho_n\xi_n}$ 
is an interpolating Blaschke product for arbitrary $\{\xi_n\}_{n\in \mathbb N}\subset\mathbb T$, see, for example, 
{\cite[Theorem 9.2]{duren}} or {\cite[Sec. C.3.3.4(c)]{nik}}. Therefore, $\prod_{n\in \frak N}b_{\rho_n\xi_n}$ is an interpolating Blaschke product for 
every $\frak N\subset\mathbb N$. 

We can consider the partition into disjoint classes 
 $$\cup_{n\in \mathbb N}\{\rho_n\}=\cup_{n\in \mathbb N}\cup_{k\in \mathbb N}\{\rho_{n,k}\},$$ 
where $ \rho_{n,k}\to 1$ when $k\to\infty$ for every $n\in\mathbb N$. Set 
$$\Lambda=\bigcup_{n\in \mathbb N}\ \bigcup_{k: \rho_{n,k}\geq r_n}\{\rho_{n,k}\zeta_n\} \ \ \text{ and } 
\ \ B=B_\Lambda=\prod_{\lambda\in\Lambda} b_\lambda.$$
Then $B$ satisfies the conclusion of the lemma. 
\end{proof}

\begin{lemma}\label{lemmapsi} Suppose that $T$ is an a.c. polynomially bounded operator, $\mathcal G\subset\mathbb D$ 
is a simply connected domain, $g\colon\mathbb D\to\mathcal G$ is a conformal mapping, $Q=g(T)$, $\psi\in H^\infty$,  
and $\inf_{\mathcal G}|\psi|>0$. Then $Q$ is an a.c. polynomially bounded operator and $\psi(Q)$ is invertible. \end{lemma}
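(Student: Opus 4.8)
The plan is to realise the $H^\infty$-functional calculus of $Q$ as the composition of the functional calculus $\Phi_T$ of $T$ with the substitution $f\mapsto f\circ g$, and then to read off both assertions from this single identity. First I would note that since $\mathcal G\subset\mathbb D$, the conformal map $g$ lies in $H^\infty$ with $\|g\|_\infty\leq 1$, so $Q=g(T)$ is defined through $\Phi_T$. For any $f\in H^\infty$ the composition $f\circ g$ is again in $H^\infty$, and because $g(\mathbb D)=\mathcal G$ we have $\|f\circ g\|_\infty=\sup_{\mathcal G}|f|\leq\|f\|_\infty$; hence I may set $\Phi_Q(f):=(f\circ g)(T)=\Phi_T(f\circ g)$. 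As $\Phi_T$ is multiplicative, $\Phi_Q(z)=g(T)=Q$ and, for a polynomial $p$, $\Phi_Q(p)=(p\circ g)(T)=p(g(T))=p(Q)$, so $\Phi_Q$ is an algebra homomorphism extending the polynomial calculus of $Q$. Polynomial boundedness with the same constant $M$ is then immediate: $\|p(Q)\|=\|(p\circ g)(T)\|\leq M\|p\circ g\|_\infty\leq M\sup_{|w|\leq 1}|p(w)|$, the last inequality holding because $g$ maps into $\mathcal G\subset\mathbb D$.

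Next I would confirm that $Q$ is \emph{absolutely continuous} by checking that $\Phi_Q$ is the weak-$*$-continuous $H^\infty$-calculus, which is what characterises an a.c. polynomially bounded operator. Since $T$ is a.c., $\Phi_T$ is continuous from the weak-$*$ topology of $H^\infty$ to the weak operator topology of $\mathcal L(\mathcal H)$. The substitution map $C_g\colon f\mapsto f\circ g$ is weak-$*$ continuous: on bounded sets weak-$*$ convergence in $H^\infty$ coincides with pointwise convergence on $\mathbb D$ (point evaluations being weak-$*$ continuous), and $C_g$ plainly takes bounded pointwise-convergent families to bounded pointwise-convergent families, so it is weak-$*$ continuous by the Krein--Smulian theorem. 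Therefore $\Phi_Q=\Phi_T\circ C_g$ inherits the weak-$*$-to-WOT continuity, and by uniqueness of the a.c. $H^\infty$-calculus $\Phi_Q$ \emph{is} the functional calculus of $Q$; in particular $Q$ is a.c. and $f(Q)=(f\circ g)(T)$ for every $f\in H^\infty$.

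Finally, for invertibility I would exploit the hypothesis $\delta:=\inf_{\mathcal G}|\psi|>0$. Because $g(\mathbb D)=\mathcal G$, the function $h:=\psi\circ g\in H^\infty$ satisfies $|h(z)|=|\psi(g(z))|\geq\delta$ for every $z\in\mathbb D$, so $h$ never vanishes and $1/h$ is analytic on $\mathbb D$ and bounded by $1/\delta$; thus $1/h\in H^\infty$. Applying the homomorphism $\Phi_T$ to the identities $h\cdot(1/h)=1=(1/h)\cdot h$ gives $\Phi_T(h)\Phi_T(1/h)=I=\Phi_T(1/h)\Phi_T(h)$. Since $\psi(Q)=\Phi_Q(\psi)=\Phi_T(h)$, it follows that $\psi(Q)$ is invertible with $\psi(Q)^{-1}=\Phi_T(1/h)$.

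The only genuinely delicate point is the absolute-continuity step, namely justifying that $f\mapsto(f\circ g)(T)$ really is the weak-$*$-continuous $H^\infty$-calculus of $Q$ (so that the $\psi(Q)$ appearing in the statement equals $\Phi_T(\psi\circ g)$); this rests on the weak-$*$ continuity of the substitution operator $C_g$ together with the defining continuity of $\Phi_T$. Everything else is formal manipulation with the homomorphism property and the elementary observation that a function in $H^\infty$ bounded away from $0$ on $\mathbb D$ is invertible in $H^\infty$.
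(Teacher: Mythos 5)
Your argument is correct and follows essentially the same route as the paper: the whole lemma rests on the identity $\psi(Q)=(\psi\circ g)(T)$ together with the observation that $1/(\psi\circ g)\in H^\infty$ because $g(\mathbb D)=\mathcal G$ and $\inf_{\mathcal G}|\psi|>0$. The only difference is that the paper simply cites {\cite[Proposition 2]{ker15}} for the facts that $Q$ is an a.c.\ polynomially bounded operator and that $f(Q)=(f\circ g)(T)$ for $f\in H^\infty$, whereas you reprove them directly via the weak-$*$ continuity of the composition operator $C_g$ and the uniqueness of the weak-$*$ continuous extension of the polynomial calculus; that self-contained verification is sound.
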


\begin{proof} $Q$ is an a.c. polynomially bounded operator and $\psi(Q)=(\psi\circ g)(T)$ by {\cite[Proposition 2]{ker15}}. 
Since $1/\psi\circ g\in H^\infty$, it is easily checked that $\psi(Q)^{-1}=(1/\psi\circ g)(T)$.
\end{proof}

\begin{theorem}\label{thm33}  Let $\{\zeta_n\}_{n\in \mathbb N}$ be a sequence of points from $\mathbb T$ such that  
$\zeta_n\neq\zeta_l$ for $n\neq l$.  Then there exist 
$\{a_n\}_{n\in \mathbb N}$, $\Lambda\subset\mathbb D$, and a quasianalytic contraction $Q$  
such that $a_n>0$, 
$\sum_{n\in \mathbb N}a_n<\infty$,  $B:=\prod_{\lambda\in\Lambda} b_\lambda$ is  an interpolating Blaschke product,  
$ \zeta_n$ is  a nontangential  limit of points from $\Lambda$ for every $n\in\mathbb N$, 
 $\cup_{n\in \mathbb N}\{\zeta_n\}\subset\sigma(Q)$, 
$$\theta_\mu(Q)\ \text{ with  }  \mu=\sum_{n\in \mathbb N}a_n\delta_{\zeta_n}\ \ \ \text{ and } \ \ B(Q) \ \text{ are invertible.}$$ 
\end{theorem}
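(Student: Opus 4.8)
The plan is to construct $Q$ as $g(T)$ for a suitable auxiliary quasianalytic contraction $T$ and the conformal map $g\colon\mathbb D\to\mathcal G$ onto the domain $\mathcal G$ produced by Lemma~\ref{lemmag}. First I would invoke Lemma~\ref{lemmag} to obtain the sequences $\{r_n\}$, $\{a_n\}$, the set $\Lambda$, the simply connected domain $\mathcal G=\mathbb D\setminus\cup_n\operatorname{clos}D(r_n\zeta_n)$, and the two functions $\theta_\mu$ (with $\mu=\sum_n a_n\delta_{\zeta_n}$) and $B=\prod_{\lambda\in\Lambda}b_\lambda$, both of which satisfy $\inf_{\mathcal G}|\theta_\mu|>0$ and $\inf_{\mathcal G}|B|>0$. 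The point of passing to $\mathcal G$ is precisely that these two inner functions are bounded below there, so that after transplanting by $g$ they become invertible.

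Next I would choose the auxiliary operator $T$. I need a quasianalytic contraction $T$ with $\sigma(T)\cap\mathbb T$ (in fact $\pi(T)$) large enough that, after applying the conformal map, the boundary points $\zeta_n$ land in $\sigma(Q)$. The natural choice is a quasianalytic contraction whose spectrum meets $\mathbb T$ in all of $\mathbb T$ (or at least in a set that $g$ carries onto a set whose closure contains every $\zeta_n$); such operators are guaranteed to exist by the references cited at the beginning of the section (e.g.\ \cite{ker01}, \cite{kerszalai14}, \cite{kerszalai15}, \cite{gamal}). Setting $Q=g(T)$, Lemma~\ref{lemmapsi} applies with $\psi=\theta_\mu$ and then again with $\psi=B$: it tells me that $Q$ is an a.c.\ polynomially bounded operator and that $\theta_\mu(Q)=(\theta_\mu\circ g)(T)$ and $B(Q)=(B\circ g)(T)$ are both invertible, since $\inf_{\mathcal G}|\theta_\mu|>0$ and $\inf_{\mathcal G}|B|>0$. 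Because $T$ is a contraction and $g$ maps $\mathbb D$ into $\mathbb D$, one checks that $Q$ is again a contraction.

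There remain two things to verify: that $Q$ is quasianalytic and that $\cup_n\{\zeta_n\}\subset\sigma(Q)$. For quasianalyticity I would use the behaviour of the set of quasianalyticity $\pi(\cdot)$ under the $H^\infty$-calculus composition $Q=g(T)$: quasianalyticity is preserved under applying a conformal (hence inner-function–type) map in the functional calculus, with $\pi(Q)=g^{-1}$-image/boundary-image of $\pi(T)$ in the appropriate sense; the cited references on quasianalytic operators supply exactly this kind of stability. For the spectral inclusion I would use the spectral mapping principle together with the geometry of $\mathcal G$: each $\zeta_n$ lies on $\partial\mathcal G\cap\mathbb T$, it is a nontangential accumulation point of $\Lambda$ (and of the measure $\mu$), and the conformal map $g$ extends to carry an arc of $\partial\mathbb D$ near the relevant point of $\sigma(T)\cap\mathbb T$ to a boundary piece of $\mathcal G$ abutting $\zeta_n$; combining this with $\sigma(Q)=\operatorname{clos}g(\sigma(T)\cap\mathbb D)\cup(\text{boundary limits})$ forces $\zeta_n\in\sigma(Q)$.

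The main obstacle I anticipate is the last step: pinning down $\sigma(Q)$ near the boundary. The spectral mapping theorem \eqref{gt} applies cleanly only for $\varphi$ continuous on $\sigma(T)$, whereas $g$ is merely a bounded analytic map whose boundary behaviour at the cluster set $\{\zeta_n\}$ is delicate, so I cannot simply write $\sigma(Q)=\operatorname{clos}g(\sigma(T))$. Instead I expect to argue the inclusion $\zeta_n\in\sigma(Q)$ directly, by showing that $Q-\zeta_n I$ cannot be bounded below: using that $\zeta_n$ is a nontangential limit of points $\rho_{n,k}\zeta_n\in\Lambda$, I would approximate $\zeta_n$ from inside $\mathcal G$ along the radial slit, transport approximate eigenvectors (or a failure of invertibility) back through $g$ to $T$ at the corresponding boundary point of $\sigma(T)\cap\mathbb T$, and use the richness of $\pi(T)$ there. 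Controlling this boundary correspondence — matching accumulation of $\Lambda$ at $\zeta_n$ with the quasianalytic spectrum of $T$ — is the technically demanding part; everything else follows formally from Lemmas~\ref{lemmag} and~\ref{lemmapsi}.
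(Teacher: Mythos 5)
Your overall strategy --- take $Q=g(T)$ for a conformal map $g$ of $\mathbb D$ onto the domain $\mathcal G$ from Lemma~\ref{lemmag} and a suitable quasianalytic contraction $T$, then obtain invertibility of $\theta_\mu(Q)$ and $B(Q)$ from Lemma~\ref{lemmapsi} --- is exactly the paper's. But there is a genuine gap at precisely the point you flag as your ``main obstacle'': you assert that the spectral mapping theorem \eqref{gt} is unavailable because $g$ is ``merely a bounded analytic map'' with delicate boundary behaviour, and you propose instead an unexecuted approximate-eigenvector argument for $\zeta_n\in\sigma(Q)$. The missing observation is that $\partial\mathcal G$ is a \emph{rectifiable} curve, because $\sum_{n}(1-r_n)<\infty$; hence $g'\in H^1$ ({\cite[Theorem 10.11]{pom75}}, {\cite[Sec.\ 6.2, 6.3]{pom92}}), hence by Hardy's inequality $\sum_{k}|\widehat g(k)|<\infty$, so $g$ extends continuously to $\operatorname{clos}\mathbb D$ and belongs to the disc algebra. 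Once this is known, \eqref{gt} applies verbatim and gives $\sigma(Q)=g(\sigma(T))$; choosing $T$ with $\sigma(T)=\operatorname{clos}\mathbb D$ yields $\sigma(Q)=\operatorname{clos}\mathcal G\supset\cup_{n}\{\zeta_n\}$, with no boundary analysis needed. Your fallback plan of transporting approximate eigenvectors through $g$ along the radial slits is not carried out, and it would require exactly the kind of boundary control you are trying to avoid.

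The same omission affects your quasianalyticity step. You appeal to an unspecified ``stability'' of $\pi(\cdot)$ under composition in the functional calculus, but the result actually needed ({\cite[Remark 8 and Corollary 13]{ker15}}) has $g'\in H^1$ as a hypothesis, which again comes from the rectifiability of $\partial\mathcal G$; one must also choose $T$ so that $m(g(\pi(T))\cap\mathbb T)>0$. Without isolating this property of $\mathcal G$ you have established neither the quasianalyticity of $Q$ nor the spectral inclusion, so both nontrivial conclusions of Theorem~\ref{thm33} are left unproved. The remainder of your proposal (the use of Lemmas~\ref{lemmag} and~\ref{lemmapsi}, and the remark that $Q$ is a contraction since $T$ is and $\|g\|_\infty\leq 1$) matches the paper.
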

\begin{proof}
Let $\mathcal G$, $\{a_n\}_{n\in \mathbb N}$, $\Lambda\subset\mathbb D$ be constructed in Lemma \ref{lemmag}. 
Let $g\colon\mathbb D\to\mathcal G$ be a conformal mapping. 
Although $\partial\mathcal G$ is not a Jordan curve,  $\partial\mathcal G$  is a rectifiable curve, since 
$\sum_{n\in \mathbb N}(1-r_n)<\infty$. Therefore, $g'\in H^1$, 
see {\cite[Theorem 10.11]{pom75}} and {\cite[Sec. 6.2,6.3]{pom92}}. 
In particular,  $\sum_{k=0}^\infty|\widehat g(k)|<\infty$ by Hardy's inequality, see, for example, {\cite[Corollary of Theorem 3.15]{duren}}. 
Therefore, $g$ is continuous on  $\operatorname{clos}\mathbb D$. Thus, $g$ belongs to the disc algebra. 
 
Let $T$ be a quasianalytic contraction such that $m(g(\pi(T))\cap\mathbb T)>0$. 
 Set $Q=g(T)$. Since $g'\in H^1$, $Q$ is quasianalytic and $\pi(Q)=g(\pi(T))\cap\mathbb T$ by {\cite[Remark 8 and Corollary 13]{ker15}}. If,  for example, $\pi(T)=\mathbb T$, then $\pi(Q)=\mathbb T$. 

 By Lemmas \ref{lemmag} and \ref{lemmapsi}, $\theta_\mu(Q)$ with $\mu=\sum_{n\in \mathbb N}a_n\delta_{\zeta_n}$ 
 and $B(Q)$ are invertible. 

By the spectral mapping theorem for functions from the disc algebra and polynomially bounded operators, $\sigma(Q)=g(\sigma(T))$, 
see Remark \ref{spectral} for references. Therefore, if $g^{-1}(\zeta_n)\cap\sigma(T)\neq\emptyset$ for some $n\in\mathbb N$, 
then $\zeta_n\in\sigma(Q)$. (The set  $g^{-1}(\zeta_n)$ consists of two points from $\mathbb T$ for every  $n\in\mathbb N$.) 
If, for example, 
$\sigma(T)=\operatorname{clos}\mathbb D$, then $\sigma(Q)=\operatorname{clos}\mathcal G$.\end{proof}

In the following examples, a construction similar to the construction from  Theorem \ref{thm33} is used. 

\begin{example}\label{exa27}
Suppose that $E\subset\mathbb T$ is a set satisfying the assumptions of Lemma \ref{lemmaee} and $E$ contains no isolated points.  
Suppose that   $\mu$ is a finite positive Borel measure on $E$ such that $\mu(\{\zeta\})=0$ for every $\zeta\in E$, and 
$\operatorname{supp}\mu=E$ (where $\operatorname{supp}\mu$  is the closed support of $\mu$). 
We have $\mathbb T\setminus E=\cup_{n\in\mathbb N}\mathcal J_{\zeta_{1n},\zeta_{2n}}$. 
Take $1/2<r<1$. Relabeling $\{\mathcal J_{\zeta_{1n},\zeta_{2n}}\}_{n\in\mathbb N}$ if necessarily, we find  
 $N\in \mathbb N$ such that 
$\operatorname{clos}D(r\zeta_{1n})\cap\operatorname{clos}D(r\zeta_{2n})\neq\emptyset$ for every $n\geq N$
 and $\operatorname{clos}D(r\zeta_{1n})\cap\operatorname{clos}D(r\zeta_{2n})=\emptyset$ for every $n < N$.
Let $n\geq N$. Then $$\mathbb D\setminus (\operatorname{clos}D(r\zeta_{1n})\cup\operatorname{clos}D(r\zeta_{2n}))
=\mathcal G_{1n}\cup\mathcal G_{2n},$$
where $\mathcal G_{1n}$, $\mathcal G_{2n}$ are  disjoint simply connected domains, 
$\partial\mathcal G_{1n}$, $\partial\mathcal G_{2n}$ 
are rectifiable Jordan curves, 
\begin{equation}\label{271}\partial\mathcal G_{1n}\cap\mathbb T=\operatorname{clos}\mathcal J_{\zeta_{1n},\zeta_{2n}}\end{equation}
 and $\partial\mathcal G_{2n}\cap\mathbb T=\mathbb T\setminus\mathcal J_{\zeta_{1n},\zeta_{2n}}$. 
It is easy to see that  
\begin{equation}\label{272}\mathcal G_{1n}\subset\mathbb D\setminus\bigcup_{l\in\mathbb N}
 (\operatorname{clos}D(r\zeta_{1l})\cup\operatorname{clos}D(r\zeta_{2l})).
\end{equation}
For $n<N$, it is easy to construct a simply connected domain $\mathcal G_{1n}$ satisfying \eqref{272} and \eqref{271} and such that  
$\partial\mathcal G_{1n}$ is a  rectifiable Jordan curve. For example, let 
$$ \mathcal G_{1n} = \{z\in\mathbb D\ :\ 2r-1<|z|, \frac{z}{|z|}\in \mathcal J_{\zeta_{1n},\zeta_{2n}}\}\setminus 
(\operatorname{clos}D(r\zeta_{1n})\cup\operatorname{clos}D(r\zeta_{2n})).$$
For every $n\in\mathbb N$, let $g_n\colon\mathbb D\to \mathcal G_{1n}$ be a conformal mapping.  
Since $\partial\mathcal G_{1n}$ is a  rectifiable  curve, we have $g_n'\in H^1$, therefore, $g_n$ belongs to the disc algebra
(see the proof of Theorem \ref{thm33} for references). 
 
Let $n\in\mathbb N$. Let $T$ be a quasianalytic contraction such that $$m(g_n(\pi(T))\cap\mathbb T)>0.$$ 
 Set $Q=g_n(T)$. Then $Q$ is quasianalytic and $\pi(Q)=g_n(\pi(T))\cap\mathbb T$. 
If, for example, $\pi(T)=\mathbb T$, then $\pi(Q)=\mathcal J_{\zeta_{1n},\zeta_{2n}}$. (See the proof of Theorem \ref{thm33} for references.) 
Since  $\pi(Q)\subset \sigma(Q)$ (see the references in the beginning of this section), 
we conclude that $\zeta_{1n}$, $\zeta_{2n}\in\sigma(Q)\cap E$. By \eqref{272}, Lemmas \ref{lemmaee} and 
\ref{lemmapsi},  $\theta_\mu(Q)$ is invertible. Thus, $Q$ is an example to Remark \ref{remarksupp}. \end{example}

\begin{example}\label{exa28}
Let $E$ and   $\mu$ be as in Example \ref{exa27}, 
and let $\{\mathcal G_{1n}\}_{n\in\mathbb N}$ and  $\{ g_n\}_{n\in\mathbb N}$ be constructed in Example \ref{exa27}.
Let   $\{T_n\}_{n\in\mathbb N}$ be a family of a.c. uniformly polynomially bounded operators, in particular, 
a.c. contractions. For every $n\in\mathbb N$, set $Q_n=g_n(T_n)$. By \eqref{gt} applied to $T_n$ and $g_n$, 
taking into account that $g_n$ is from the disc algebra, we conclude that $\sigma(Q_n)=g_n(\sigma(T_n))$. 
Furthermore, $(Q_n-a)^{-1}=(1/(g_n-a))(T_n)$ for $a\not\in \operatorname{clos}\mathcal G_{1n}$ and 
$\theta_\mu(Q_n)^{-1}=(1/\theta_\mu\circ g_n)(T_n)$ (see the proof of Lemma \ref{lemmapsi}). By \eqref{272} and Lemma \ref{lemmaee}, 
$\sup_{n\in\mathbb N}\|\theta_\mu(Q_n)^{-1}\|<\infty$. 

Set $$\mathcal G=\bigcup_{n\in\mathbb N} \mathcal G_{1n}\ \ \ \text{ and } 
\ \ \ Q=\bigoplus_{n\in\mathbb N}Q_n.$$ Then $Q$ is an a.c. polynomially bounded operator, 
$\theta_\mu(Q)$ is invertible, and 
$$\operatorname{clos}\bigcup_{n\in\mathbb N}g_n(\sigma(T_n))\subset\sigma(Q)\subset\operatorname{clos}\mathcal G.$$
If $\sigma(T_n)=\operatorname{clos} \mathbb D$ for all $n\in\mathbb N$, then $\sigma(Q)=\operatorname{clos}\mathcal G$. 
Thus, $Q$ is an example to Remark \ref{remarkexample}.
\end{example}

\section{Invertibility of functions of weighted shifts}

Recall the definition of a bilateral weighted shift, see \cite{shields}.
Let $\omega\colon\mathbb Z \to (0,\infty)$ be a function such that $\sup_{n\in\mathbb Z}\frac{\omega(n+1)}{\omega(n)}<\infty$.
Set $$\ell^2_\omega  =\big \{u=\{u(n)\}_{n\in\mathbb Z}:\ \ 
\|u\|_\omega^2=\sum_{n\in\mathbb Z}|u(n)|^2\omega(n)^2<\infty\big\}.$$
The {\it bilateral weighted shift} $S_\omega\in\mathcal L(\ell^2_\omega)$ acts according to the formula 
$$(S_\omega u)(n)=u(n-1), \ \ n\in\mathbb Z,  \ \ u\in\ell^2_\omega.$$ 
If $\omega(n)=1$ for every $n\in\mathbb Z$, then $S_\omega$ is called the simple bilateral shift. Clearly, 
 the simple bilateral shift is a unitary operator, and $S_\omega$ is similar to the simple bilateral shift if and only if 
\begin{equation}\label{estomega} 0<\inf_{n\in\mathbb Z}\omega(n)\leq\sup_{n\in\mathbb Z}\omega(n)<\infty \end{equation} 
(see {\cite[Theorem $2'$]{shields}}). 
Easy computation shows (see {\cite[Corollary of Proposition 7]{shields}}) that 
\begin{equation}\label{shiftpower}\|S_\omega^k\|=\sup_{n\in\mathbb Z}\frac{\omega(n+k)}{\omega(n)} 
\ \ \text{ for every } k\in\mathbb N\cup\{0\}.\end{equation}
Recall that $\sigma(S_\omega)$ is a closed disc or a (may be degenerate) annulus centered at origin {\cite[Theorem 5]{shields}}.
By {\cite[Corollary of Theorem 2]{shields}}, if $S_\omega$ is a power bounded weighted shift ($\sup_{k\in\mathbb N}\|S_\omega^k\|<\infty$), 
then $S_\omega$ is similar to a contractive weighted shift, which is necessarily a.c.. Therefore, $\varphi(S_\omega)$ is well defined for a power bounded weighted shift  
$S_\omega$ and $\varphi\in H^\infty$. In this section it is proved that if $\liminf|\varphi(z)|=0$ when $|z|\to 1$, 
$S_\omega$ is a power bounded weighted shift with 
$\mathbb T\subset\sigma(S_\omega)$,  and 
$\varphi(S_\omega)$ is invertible, then  $S_\omega$ is similar to the simple bilateral shift. 

We need the following simple lemmas. 

\begin{lemma}\label{lemmar}Suppose that $\{\alpha_n\}_{n=0}^\infty$, $\{\beta_n\}_{n=0}^\infty$ are sequences of non-negative numbers, $C>0$,
 $\{r_j\}_j\subset(0,1)$, $r_j\to 1$,  and $$\sum_{n=0}^\infty\alpha_n r_j^n\leq C \sum_{n=0}^\infty\beta_n r_j^n <\infty\ \ \text{ for all } \ j.$$
Then $\limsup_n (C\beta_n-\alpha_n)\geq 0$. \end{lemma}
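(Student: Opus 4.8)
The plan is to argue by contradiction and to exploit the divergence of the geometric tail as $r_j\to 1$. Suppose the conclusion fails, so that $\limsup_n(C\beta_n-\alpha_n)<0$. By the definition of $\limsup$ there exist $\delta>0$ and $N\in\mathbb N$ with
$$\alpha_n\geq C\beta_n+\delta\quad\text{for all } n\geq N.$$
The crucial observation is that this is a \emph{uniform} lower bound on the gap $\alpha_n-C\beta_n$ over the whole infinite tail $\{n\geq N\}$, whereas the coupling hypothesis $\sum_n\alpha_n r_j^n\leq C\sum_n\beta_n r_j^n$ must hold for every $j$. All the series involved are finite by hypothesis, so the rearrangements below are legitimate.

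Next I would multiply the coefficient inequality by $r_j^n\geq 0$ and sum over $n\geq N$, obtaining
$$\sum_{n\geq N}\alpha_n r_j^n\geq C\sum_{n\geq N}\beta_n r_j^n+\delta\sum_{n\geq N}r_j^n.$$
Adding the nonnegative head terms $\sum_{n<N}\alpha_n r_j^n$ and comparing with the hypothesis, the two occurrences of $C\sum_{n\geq N}\beta_n r_j^n$ cancel; after discarding $\sum_{n<N}\alpha_n r_j^n\geq 0$ this yields
$$\delta\sum_{n\geq N}r_j^n\leq C\sum_{n<N}\beta_n r_j^n.$$

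Finally I would let $j\to\infty$. The left-hand side equals $\delta\, r_j^N/(1-r_j)\to+\infty$ since $r_j\to 1^-$, while the right-hand side is a fixed finite sum of $N$ terms, bounded above by $C\sum_{n<N}\beta_n<\infty$ uniformly in $j$ because $r_j^n\leq 1$. This contradiction proves the lemma. The argument is entirely elementary; there is no serious obstacle, and the only points requiring a little care are the correct extraction of the uniform gap $\delta$ and threshold $N$ from the assumption $\limsup_n(C\beta_n-\alpha_n)<0$, together with the bookkeeping that keeps the head and tail contributions separate so that the geometric blow-up of $\sum_{n\geq N}r_j^n$ can be isolated.
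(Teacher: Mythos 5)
Your proof is correct and follows essentially the same route as the paper's: argue by contradiction, extract a uniform gap $\delta>0$ on a tail $n\geq N$, and derive a contradiction from the blow-up of $\sum_{n\geq N}r_j^n=r_j^N/(1-r_j)$ as $r_j\to 1$ against a bounded head contribution. The only cosmetic difference is that the paper bounds $\sum_{n=0}^\infty(C\beta_n-\alpha_n)r_j^n\geq 0$ directly rather than cancelling the tail of the $\beta$-series, but the argument is the same.
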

\begin{proof} Suppose that there exist $\varepsilon>0$ and an index $N$ such that $C\beta_n-\alpha_n<-\varepsilon$ for every $n\geq N+1$.
Then \begin{align*} 0 & \leq \sum_{n=0}^\infty(C\beta_n-\alpha_n)r_j^n = 
\sum_{n=0}^N(C\beta_n-\alpha_n)r_j^n +\sum_{n=N+1}^\infty(C\beta_n-\alpha_n)r_j^n
\\ & \leq \sum_{n=0}^N(C\beta_n-\alpha_n)r_j^n -\varepsilon \sum_{n=N+1}^\infty r_j^n = 
\sum_{n=0}^N(C\beta_n-\alpha_n)r_j^n - \varepsilon \frac{r_j^{N+1}}{1-r_j}.\end{align*}
The right side of this inequality tends to $-\infty$ when $r_j\to 1$, a contradiction.\end{proof}

\begin{lemma}\label{lemmaomega}Suppose that $\omega\colon\mathbb Z \to (0,\infty)$ is such that 
\begin{equation}\label{powerfinite}\sup_{k\geq 0}\sup_{n\in\mathbb Z}\frac{\omega(n+k)}{\omega(n)} <\infty,
\end{equation} 
$C>0$,  $\{r_j\}_j\subset(0,1)$, $r_j\to 1$,   and 
\begin{equation}\label{44}\sum_{n=0}^\infty\omega(-n-1)^2 r_j^n\leq C^2 \sum_{n=0}^\infty\omega(n)^2 r_j^n 
\ \ \text{ for all } \ j.\end{equation}
Then $\omega$ satisfies  \eqref{estomega}.
\end{lemma}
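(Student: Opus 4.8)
The plan is to read \eqref{powerfinite} as a \emph{quasi-monotonicity} property of $\omega$ and to feed \eqref{44} into Lemma~\ref{lemmar}. Put $P=\sup_{k\ge0}\sup_{n\in\mathbb Z}\omega(n+k)/\omega(n)$, which is finite by \eqref{powerfinite} and satisfies $P\ge1$. Writing $n=m+(n-m)$ with $n-m\ge0$ for $m\le n$ gives $\omega(n)\le P\omega(m)$ whenever $m\le n$. Two one-sided bounds are then immediate: taking $m=0$ yields $\sup_{n\ge0}\omega(n)\le P\omega(0)<\infty$, and taking $n=0$ yields $\inf_{n\le0}\omega(n)\ge\omega(0)/P>0$. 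So the only halves of \eqref{estomega} still missing are an upper bound for $\omega$ on the negative integers and a lower bound on the positive integers.

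To reach these I would apply Lemma~\ref{lemmar} with $\alpha_n=\omega(-n-1)^2$, $\beta_n=\omega(n)^2$, and constant $C^2$. The right-hand series in \eqref{44} is finite because $\beta_n\le P^2\omega(0)^2$ is bounded, so the hypotheses of Lemma~\ref{lemmar} hold and we obtain $\limsup_n\bigl(C^2\omega(n)^2-\omega(-n-1)^2\bigr)\ge0$. Fix $\varepsilon_0=\omega(0)^2/(2P^2)>0$. Since the $\limsup$ is $\ge0>-\varepsilon_0$, there are infinitely many indices, which I collect as a sequence $n_i\to\infty$, for which $C^2\omega(n_i)^2-\omega(-n_i-1)^2\ge-\varepsilon_0$. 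For every such $n_i$, boundedness $\omega(n_i)\le P\omega(0)$ forces $\omega(-n_i-1)^2\le C^2P^2\omega(0)^2+\varepsilon_0=:L_0^2$, while $\omega(-n_i-1)\ge\omega(0)/P$ forces $C^2\omega(n_i)^2\ge\omega(0)^2/P^2-\varepsilon_0=\omega(0)^2/(2P^2)$, i.e.\ $\omega(n_i)\ge\delta:=\omega(0)/(\sqrt2\,PC)>0$.

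The final step is to upgrade these subsequential estimates to uniform ones through the quasi-monotonicity $\omega(n)\le P\omega(m)$ $(m\le n)$. For the upper bound on negatives, fix $m\ge1$ and pick $n_i\ge m-1$ from the sequence, so that $-n_i-1\le-m$ and hence $\omega(-m)\le P\omega(-n_i-1)\le PL_0$; thus $\sup_{n\le0}\omega(n)<\infty$. For the lower bound on positives, fix $m\ge0$ and pick $n_i\ge m$, so that $\omega(n_i)\le P\omega(m)$ and hence $\omega(m)\ge\omega(n_i)/P\ge\delta/P>0$; thus $\inf_{n\ge0}\omega(n)>0$. Combining with the two one-sided bounds from the first paragraph yields \eqref{estomega}.

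The only genuine difficulty is that Lemma~\ref{lemmar} delivers information that is both one-sided (a $\limsup$, not a true limit) and only along a subsequence, whereas \eqref{estomega} demands uniform two-sided control over all of $\mathbb Z$. I expect this gap to be the crux, and the device that closes it is precisely the quasi-monotonicity extracted from power boundedness: because the value of $\omega$ at a less negative (resp.\ larger) index is dominated by $P$ times its value at any more negative (resp.\ smaller) index, a single infinite set of indices on which $\omega$ is simultaneously small at the negative places $-n_i-1$ and bounded below at the positive places $n_i$ propagates to a bound at \emph{every} index. No Abelian or Tauberian estimate for the power series is required beyond Lemma~\ref{lemmar} itself.
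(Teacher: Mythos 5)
Your proof is correct and follows essentially the same route as the paper: apply Lemma~\ref{lemmar} with $\alpha_n=\omega(-n-1)^2$, $\beta_n=\omega(n)^2$ to extract a sequence $n_i\to\infty$ along which $\omega(-n_i-1)$ is controlled by $\omega(n_i)$, and then propagate to all indices using the quasi-monotonicity $\omega(n)\le P\,\omega(m)$ for $m\le n$ coming from \eqref{powerfinite}. The only difference is presentational (explicit constants and a fixed $\varepsilon_0$ instead of the paper's contradiction argument with $\varepsilon_j\to0$), so no further comment is needed.
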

\begin{proof} First, note that \eqref{powerfinite} implies that $\sup_{n\geq 0}\omega(n)<\infty$. It follows from \eqref{44} and 
Lemma \ref{lemmar} that there exist a subsequence $\{n_j\}_j$  and 
a sequence $\{\varepsilon_j\}_j\subset(0,\infty)$ such that $n_j\to+\infty$,  $\varepsilon_j\to 0$, and 
\begin{equation}\label{enj}\omega(-n_j-1)\leq C\omega(n_j)+\varepsilon_j\ \ \text{ for all } j.\end{equation}
In particular,  $\sup_j\omega(-n_j-1)<\infty$. This relation and \eqref{powerfinite} imply that 
$\sup_{n\geq 0}\omega(-n)<\infty$.

If $\inf_{n\geq 0}\omega(-n)=0$, then \eqref{powerfinite} implies that $\omega(n) = 0$ for every $n\in\mathbb Z$. 
If $\inf_{n\geq 0}\omega(n)=0$, then \eqref{powerfinite} implies that $\omega(n)\to 0$ when $n\to+\infty$. In particular, 
$\omega(n_j)\to 0$. Then, by \eqref{enj}, $\omega(-n_j-1)\to 0$. As was mentioned above, this relation with  \eqref{powerfinite} 
 implies that $\omega(n) = 0$ for every $n\in\mathbb Z$. Thus, $\inf_{n\in\mathbb Z}\omega(n)>0$.\end{proof}

\begin{theorem}\label{tshift} Suppose that $S_\omega$ is a power bounded weighted shift, 
$\mathbb T\subset\sigma(S_\omega)$,   $\varphi\in H^\infty$, $\liminf|\varphi(z)|=0$ when $|z|\to 1$, and  
$\varphi(S_\omega)$ is invertible. Then  $S_\omega$ is similar to the simple bilateral shift. 
\end{theorem}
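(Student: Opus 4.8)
The plan is to derive, for a suitable sequence $r_j\to1$, the inequality \eqref{44} for the sequence $\{\omega(n)\}$, and then to invoke Lemma~\ref{lemmaomega}. Since $S_\omega$ is power bounded, by {\cite[Corollary of Theorem 2]{shields}} it is similar to a contractive, hence a.c., weighted shift; thus $S_\omega$ is an a.c. polynomially bounded operator and Lemma~\ref{L:4.7} is available. Moreover \eqref{shiftpower} shows that power boundedness is exactly condition \eqref{powerfinite}. I fix $c$ with $0<c<1/\|\varphi(S_\omega)^{-1}\|$ and let $C$ be the constant produced by Lemma~\ref{L:4.7}. Because $\liminf_{|z|\to1}|\varphi(z)|=0$, there is a sequence $\lambda_j\in\mathbb D$ with $|\lambda_j|\to1$ and $|\varphi(\lambda_j)|\le c$; for each such $\lambda_j$ Lemma~\ref{L:4.7} gives $\lambda_j\notin\sigma(S_\omega)$ together with
$$\|(S_\omega-\lambda_j I)^{-1}e_0\|\le C\|(I-\overline{\lambda_j}S_\omega)^{-1}e_0\|,$$
where $e_0\in\ell^2_\omega$ is the vector $e_0(n)=\delta_{0,n}$, so that $S_\omega^k e_0$ is supported at $k$ with value $1$ and has norm $\omega(k)$.

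Next I locate the points $\lambda_j$. By {\cite[Theorem 5]{shields}} the spectrum $\sigma(S_\omega)$ is a closed annulus $\{z:r_1\le|z|\le r_2\}$ centred at the origin. Power boundedness forces the spectral radius $r_2=\lim_k\|S_\omega^k\|^{1/k}\le1$, while $\mathbb T\subset\sigma(S_\omega)$ forces $r_2\ge1$; hence $r_2=1$. The $\lambda_j$ lie in the resolvent set and in $\mathbb D$, so they lie in the inner hole $\{|z|<r_1\}$; since $|\lambda_j|\to1$, this forces $r_1=1$, i.e. $\sigma(S_\omega)=\mathbb T$. In particular $S_\omega$ is invertible and each $\lambda_j$ has modulus strictly below the inner radius, so the resolvent is given there by the norm-convergent series $(S_\omega-\lambda_j I)^{-1}=\sum_{k\ge0}\lambda_j^k S_\omega^{-k-1}$, while $(I-\overline{\lambda_j}S_\omega)^{-1}=\sum_{k\ge0}\overline{\lambda_j}^k S_\omega^{k}$ since the spectral radius of $\overline{\lambda_j}S_\omega$ is $|\lambda_j|<1$.

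Now I compute both sides on $e_0$. Since $S_\omega^{-k-1}e_0$ is supported at $-k-1$ and $S_\omega^k e_0$ at $k$, and these vectors are mutually orthogonal in $\ell^2_\omega$,
$$\|(S_\omega-\lambda_j I)^{-1}e_0\|^2=\sum_{k\ge0}|\lambda_j|^{2k}\omega(-k-1)^2,\qquad \|(I-\overline{\lambda_j}S_\omega)^{-1}e_0\|^2=\sum_{k\ge0}|\lambda_j|^{2k}\omega(k)^2,$$
the second series being finite because $\omega$ is bounded on the nonnegative integers. Writing $r_j=|\lambda_j|^2\in(0,1)$ with $r_j\to1$, the displayed inequality squares to exactly \eqref{44} (with the constant of Lemma~\ref{lemmaomega} taken equal to $C$). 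Together with \eqref{powerfinite}, Lemma~\ref{lemmaomega} then yields \eqref{estomega}, which by {\cite[Theorem $2'$]{shields}} means that $S_\omega$ is similar to the simple bilateral shift.

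The main obstacle is the localization step: one must be sure that for the chosen $\lambda_j$ the resolvent $(S_\omega-\lambda_j I)^{-1}e_0$ is supported on the negative integers (the inner-hole expansion) rather than on the nonnegative ones, for otherwise the roles of $\omega(-n-1)$ and $\omega(n)$ in \eqref{44} would be interchanged and Lemma~\ref{lemmaomega} could not be applied in the required direction. This is precisely what the two hypotheses secure: $\mathbb T\subset\sigma(S_\omega)$ pins the outer radius at $1$, and $\liminf_{|z|\to1}|\varphi(z)|=0$ supplies resolvent points approaching $\mathbb T$ from inside, which together force $\sigma(S_\omega)=\mathbb T$ and place every $\lambda_j$ below the inner radius.
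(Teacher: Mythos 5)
Your proposal is correct and follows essentially the same route as the paper's proof: reduce to an a.c. contraction, use Lemma~\ref{L:4.7} to place the points $\lambda_j$ with $|\varphi(\lambda_j)|\le c$ in the resolvent set (forcing $\sigma(S_\omega)=\mathbb T$) and to get the uniform comparison of resolvents, evaluate both resolvents on $u_0$ to obtain \eqref{44} with $r_j=|\lambda_j|^2$, and conclude via Lemma~\ref{lemmaomega} and {\cite[Theorem $2'$]{shields}}. Your extra care in justifying that $(S_\omega-\lambda_j I)^{-1}u_0$ is supported on the negative integers is a welcome explicit version of what the paper does by direct computation.
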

\begin{proof} As was mentioned above, $S_\omega$ is similar to an a.c. contraction ({\cite[Corollary of Theorem 2]{shields}}); thus, $\varphi(S_\omega)$ is well defined. 

Let $0<c<1/\|\varphi(S_\omega)^{-1}\|$. By assumption, there exists $\{\lambda_j\}_j\subset\mathbb D$ 
such that $|\lambda_j|\to 1$ and $|\varphi(\lambda_j)|\leq c$. By Lemma \ref{L:4.7}, $\lambda_j\not\in\sigma(S_\omega)$
for all $j$. Since $\mathbb T\subset\sigma(S_\omega)\subset\operatorname{clos}\mathbb D$ (the latter inclusion is due to power boundedness of $S_\omega$),  
and $\sigma(S_\omega)$ is an annulus  ({\cite[Theorem 5]{shields}}), we conclude that $\sigma(S_\omega)=\mathbb T$.

 Let $u_n\in\ell^2_\omega$, $u_n(k)=0$, $n\neq k$, $u_n(n)=1$ ($n\in\mathbb Z$). Then 
\begin{align*} (S_\omega-\lambda I)^{-1}u_0&=\sum_{n=0}^\infty \lambda^n u_{-n-1}, &\|(S_\omega-\lambda I)^{-1}u_0\|^2&=\sum_{n=0}^\infty |\lambda|^{2n}\omega(-n-1)^2, \\
 (I-\overline\lambda S_\omega)^{-1}u_0&=\sum_{n=0}^\infty \overline\lambda^n u_n, 
& \|(I-\overline\lambda S_\omega)^{-1}u_0\|^2&=\sum_{n=0}^\infty  |\lambda|^{2n}\omega(n)^2 \end{align*}
 for every $\lambda \in\mathbb D$.
 By Lemma \ref{L:4.7}, 
 there exists $C>0$ such that 
$$\|(S_\omega-\lambda I)^{-1}u\|\leq C\|(I-\overline\lambda S_\omega)^{-1}u\|$$ 
for every  $u\in\ell^2_\omega$ and every $\lambda \in\mathbb D$ such that $|\varphi(\lambda)|\leq c$. 
We obtain that $$\sum_{n=0}^\infty |\lambda_j|^{2n}\omega(-n-1)^2\leq C^2\sum_{n=0}^\infty  |\lambda_j|^{2n}\omega(n)^2$$ 
with $|\lambda_j|\to 1$.  Since  $S_\omega$ is  power bounded, $\omega$ satisfies  \eqref{powerfinite} (see \eqref{shiftpower}).  Applying Lemma \ref{lemmaomega} with $r_j=|\lambda_j|^2$ we obtain \eqref{estomega}. As 
was mentioned in the beginning of this section,  
\eqref{estomega} implies the similarity of $S_\omega$ to the simple bilateral shift. 
\end{proof}

\begin{remark}\label{remest} Recall that for every singular inner function $\theta$ there exists a weight $\omega$ such that $\omega(n)=1$ for $n\geq 0$, the  weighted shift $S_\omega$ is a quasianalytic contraction, 
$\sigma(S_\omega)=\mathbb T$, and $\operatorname{clos}\theta(S_\omega)\ell^2_\omega= \ell^2_\omega$ 
{\cite[Theorem 5.9]{esterle}}. \end{remark}

The author is grateful to the referee for careful reading of the paper and correcting many inaccuracies and misprints.

\end{document}